\newtheorem{mainthm}{Theorem}
\newtheorem{theorem}{Theorem}[section]
\newtheorem*{theorem*}{Theorem}
\newtheorem{lemma}[theorem]{Lemma}
\newtheorem*{proposition*}{Proposition}
\newtheorem*{conjecture*}{Conjecture}
\theoremstyle{definition}
\newtheorem{definition}[theorem]{Definition}
\newtheorem{remark}[theorem]{Remark}
\numberwithin{equation}{section}
\def\bR {\mathbb{R}}
\def\bS {\mathbb{S}}
\def\bZ {\mathbb{Z}}
\def\cE {\mathcal{E}}
\def\cH {\mathcal{H}}
\def\cJ {\mathcal{J}}
\def\scrL{\mathscr{L}}
\def\grad {{\nabla}}
\def\la {\langle}
\def\ra {\rangle}
\newcommand{\wto}{\rightharpoonup}
\newcommand{\wt}[1]{\widetilde{#1}}
\newcommand{\bs}[1]{\boldsymbol{#1}}
\newcommand{\eee}{\mathrm e}
\newcommand{\ud}{\mathrm{\,d}}
\newcommand{\vd}{\mathrm{d}}
\newcommand{\vD}{\mathrm{D}}
\newcommand{\lin}{_{\textsc{l}}}
\newcommand{\linn}{_{n, \textsc{l}}}
\newcommand{\bfd}{{\mathbf d}}
\begin{document}

\title[Two-bubble resolution]{Continuous time soliton resolution \\ for two-bubble equivariant wave maps}
\author{Jacek Jendrej}
\author{Andrew Lawrie}

\begin{abstract}
We consider the energy-critical wave maps equation $\bR^{1+2} \to \bS^2$ in the equivariant case.
We prove that if a wave map decomposes, along a sequence of times, into a superposition
of at most two rescaled harmonic maps (bubbles) and radiation, then such a decomposition holds for continuous time.
If the equivariance degree equals one or two, we deduce, as a consequence of sequential soliton resolution results of C\^ote~\cite{Cote15},
and Jia and Kenig~\cite{JK}, that any topologically trivial equivariant wave map with energy less than four times the energy of the bubble asymptotically decomposes into (at most two) bubbles and radiation.


\end{abstract}

\thanks{J.Jendrej was supported by  ANR-18-CE40-0028 project ESSED.  A. Lawrie was supported by NSF grant DMS-1954455, a Sloan Research Fellowship, and the Solomon Buchsbaum Research Fund}

\maketitle

\section{Introduction} 

\subsection{Setting of the problem}
This paper concerns wave maps  from the Minkowski space $\bR^{1+2}_{t, x}$ into the two-sphere $\bS^2$, with k-equivariant symmetry. These are formal critical points of the Lagrangian action, 
\begin{equation}
\scrL( \Psi)  = \frac{1}{2} \iint_{\bR^{1+2}_{t, x}} \big( {-} |\partial_t \Psi(t, x)|^2 + |\grad \Psi(t, x)|^2 \big) \, \ud x  \ud t, 
\end{equation}
restricted to the class of maps  $\Psi: \bR^{1+2}_{t, x} \to \bS^2 \subset \bR^3$ that take the form, 
\begin{equation}
\Psi(t, r\eee^{i\theta}) = ( \sin \psi (t, r)\cos k\theta , \sin \psi(t, r) \sin  k \theta, \cos \psi(t, r)) \in \bS^2 \subset \bR^3,
\end{equation}
for some fixed $k \in \{1, 2, \ldots\}$. Here $\psi$ is the colatitude measured from the north pole of the sphere,  the metric on $\bS^2$ is given by $\vd s^2 = \vd \psi^2+ \sin^2 \psi\,  \vd \omega^2$,  and $(r, \theta)$ are polar coordinates on $\bR^2$.

Wave maps are called nonlinear $\sigma$-models in the high energy physics literature, see for example~\cite{MS, GeGr17}. They are a canonical example of a geometric wave equation as they generalize the free scalar wave equation to the geometric setting of manifold-valued maps. The $2d$ case we consider is of particular interest, since the static solutions given by finite energy harmonic maps are amongst the simplest examples of topological solitons;  other examples include kinks in scalar field equations,  vortices in Ginzburg-Landau equations, magnetic monopoles, Skyrmions, and Yang-Mills instantons; see~\cite{MS}. 
Wave maps under $k$-equivariant symmetry possess intriguing features from the point of view of nonlinear dynamics, for example, bubbling harmonic maps, multi-soliton solutions, etc.,  in the relatively simple setting of a geometrically natural scalar semilinear wave equation. 
%


The Cauchy problem for $k$-equivariant wave maps is given by
\begin{equation} \label{eq:wmk}
\begin{gathered}
\partial_{t}^2 \psi -   \partial_{r}^2 \psi  - \frac{1}{r}  \partial_r \psi + k^2  \frac{\sin 2 \psi}{2r^2} = 0, \\ 
(\psi(T_0), \partial_t \psi(T_0)) = (\psi_0, \dot \psi_0), \quad T_0 \in \bR.
\end{gathered}
\end{equation}
The conserved energy is 
\begin{equation}
\label{eq:energy} 
E( \bs \psi(t)) := 2 \pi \int_0^\infty \frac{1}{2}  \bigg((\partial_t \psi)^2  + (\partial_r \psi)^2 + k^2 \frac{\sin^2 \psi}{r^2} \bigg)\,r\,\vd r,
\end{equation}
where we have used bold font to denote the vector 
$
\bs \psi (t) := (\psi(t), \partial_t \psi(t)).
$
We will write pairs of functions as  $\bs \phi = (\phi, \dot \phi)$, noting that the notation $\dot \phi$ will not, in general, refer to a time derivative of $\phi$ but rather just to the second component of $\bs \phi$. With this notation~\eqref{eq:wmk} can be rephrased as the Hamiltonian system 
\begin{equation} \label{eq:uham} 
\partial_t \bs \psi(t) =  J \circ \vD E( \bs \psi(t)),\qquad \bs\psi(T_0) = \bs\psi_0,
\end{equation}
where 
\begin{equation} \label{eq:DE}
J = \begin{pmatrix} 0 &1 \\ -1 &0\end{pmatrix}, \quad \vD E( \bs \psi(t))  =  \begin{pmatrix} - \partial_r^2 \psi(t)- r^{-1}\partial_r \psi(t)  + k^2r^{-2}2^{-1}\sin(2\psi(t)) \\ \partial_t \psi(t) \end{pmatrix}.
\end{equation}
We remark that both~\eqref{eq:uham}  and~\eqref{eq:energy} are invariant under the scaling
\begin{equation} \label{eq:uscale} 
(\psi(t, r), \partial_t \psi(t, r))
\mapsto
\big(\psi(t/ \lambda,  r/ \lambda), \lambda^{-1} \partial_t \psi( t/ \lambda, r/ \lambda)\big), \qquad \lambda >0,
\end{equation}
which makes this problem energy-critical.

For $\psi_0: (0, \infty) \to \bR$, we denote
\begin{equation}
\label{eq:energy-pot}
E_p(\psi_0) = 2 \pi \int_0^\infty \frac{1}{2}  \bigg((\partial_r \psi)^2 + k^2 \frac{\sin^2 \psi}{r^2} \bigg)\,r\,\vd r
\end{equation}
the potential part of the energy \eqref{eq:energy}.
 It is easy to check that any $k$-equivariant state $\psi_0$  of finite potential energy must satisfy $\lim_{r \to 0}\psi_0( r) = \ell\pi$ and  $\lim_{r \to \infty}\psi_0(r)=m\pi$  for some $\ell, m \in \bZ$,
 which splits the set of states of finite potential energy into disjoint classes, which we denote $\cH_{\ell, m}$.
 These classes are related to the topological degree of the full map $\Psi(t): \bR^2 \to  \bS^2$:
 if $m - \ell$ is even and $\psi_0 \in \cH_{\ell, m}$, then the corresponding map $\Psi$ is topologically trivial,
 whereas for odd $m - \ell$ we obtain maps of degree $k$.
 
 The sets $\cH_{\ell, m}$ are affine spaces, parallel to the linear space $\cH := \cH_{0, 0}$, which we equip with the norm
 \begin{equation}
 \|\psi_0 \|_{\cH}^2 := \int_0^\infty  \Big((\partial_r \psi_0(r))^2 +  k^2 \frac{\psi_0(r)^2}{r^2}  \Big) \, r \, \vd r.
 \end{equation}
 We denote $L^2 := L^2(r\vd r)$ and $\cE_{\ell, m} := \cH_{\ell, m} \times L^2$ the set of finite energy initial data
 corresponding to the class $\cH_{l, m}$.
 It is natural to consider the Cauchy problem~\eqref{eq:wmk} within a fixed class $\cE_{\ell, m}$.
 The set $\cE:= \cE_{0, 0}$ is a linear space, which comes with the norm
  \begin{equation}
  \| \bs \psi_0 \|_{\cE}^2 := \|\psi_0 \|_{\cH}^2 + \| \dot \psi_0 \|_{L^2}^2 = \int_0^\infty  \Big((\partial_r \psi_0(r))^2 +  k^2 \frac{\psi_0(r)^2}{r^2}  \Big) \, r \, \vd r +  \int_0^\infty \dot \psi_0(r)^2 \, r \, \vd r .
 \end{equation}

The unique (up to scaling and sign change, and adding a multiple of $\pi$)
$k$-equivariant harmonic map is given explicitly by 
\begin{equation} \label{eq:Q-def} 
Q(r) := 2 \arctan (r^k).
\end{equation}
The function $Q$, and its rescaled versions $Q_\lambda(r) := Q(\lambda^{-1}r)$ for $\lambda > 0$,
are minimizers of $E_p$ within the class $\cH_{0, 1}$.
On can compute that $E_p( Q_\lambda ) = 4  \pi k$.
We denote $\bs Q_\lambda := (Q_\lambda, 0)$ the initial data yielding the stationary solution of \eqref{eq:wmk},
$\bs \psi(t) = \bs Q_\lambda$.

Linearizing \eqref{eq:wmk} around $\psi = 0$ leads to the equation
 \begin{equation}
 \label{eq:wmklin}
 \partial_t^2 \psi\lin + L_0 \psi\lin = 0, \qquad \text{where}\quad L_0 := -\partial_r^2 - r^{-1}\partial_r + k^2 r^{-2}.
 \end{equation}
%
 If a solution $\bs \psi$ of \eqref{eq:wmk} converges in the energy space to a solution of \eqref{eq:wmklin}
 as $t \to \infty$, so that the nonlinearity becomes negligible at main order, we say that $\bs \psi$ \emph{scatters}
 in the positive time direction.
 
 \subsection{Sub-threshold theorems, bubbling and soliton resolution}
 The regularity theory for $k$-equivariant wave maps is well understood, see~\cite{CTZduke, CTZcpam, STZ92, STZ94, ShSt00}, and recent research has been focused on the nonlinear dynamics of solutions with large energy. A guiding principle is 
called the \emph{soliton resolution conjecture},  which asserts that every finite energy $k$-equivariant wave map asymptotically decouples into a superposition of finitely many harmonic maps (bubbles) with dynamically separating scaling parameters plus a term capturing the linear radiation.   There has been substantial progress towards this conjecture over the last twenty years. 

Struwe's sequential characterization of singular wave maps~\cite{Struwe} can be viewed as a first step in the direction of soliton resolution. He proved that any wave map that blows up in finite time converges locally along a well chosen sequence of times and a well chosen sequence of scales to a non-constant harmonic map. Struwe's bubbling theorem has an immediate consequence for the regularity theory: every wave map with energy less than that of the ground state harmonic map must be globally regular.  Since only topologically trivial wave maps, and more specifically those in the class $\cE_{\ell,\ell}$, can scatter, one is led to the following formulation of the threshold theorem  proved in~\cite{CKLS1} using the Kenig-Merle road map~\cite{KM06, KM08}: every wave map $\bs\psi \in \cE_{\ell, \ell}$ with $E( \bs  \psi) < 2E( \bs Q)$ must scatter in both time directions. That the threshold is $2 E(\bs Q)$ rather than $E(\bs Q)$ reflects the fact that any $k$-equivariant element of $\cE_{\ell, \ell}$ that develops a bubble must use a least another quantum of  energy $E(\bs Q)$ to connect back to $\ell \pi$; see~\cite{ST2, LO1} for generalizations of these results outside equivariant symmetry. 

Using similar logic, a natural threshold in the class $\cE_{0, 1}$ is $E < 3E( \bs Q)$ since this is the maximal energy level allowing for at most one bubble to form. It was proved in~\cite{CKLS1, CKLS2}, using ideas from Duyckaerts, Kenig, and Merle~\cite{DKM1, DKM2, DKM3, DKM4}, that continuous soliton resolution does hold in this regime. For every global-in-forward-time $1$-equivariant wave map $\bs \psi \in \cE_{0, 1}$ with $E( \bs \psi) < 3E(\bs Q)$ one can find a \emph{continuous} function $\lambda(t)  \ll t$ and a finite energy linear wave  $\bs \psi\lin^*$  so that $\bs \psi(t)$ satisfies, 
\begin{align} 
 \bs \psi(t)  = \bs Q_{\lambda(t)}  + \bs \psi^*\lin(t) + o_{\cE}(1) \, \, \textrm{as} \,\,  t \to \infty .
 \end{align} 
For wave maps in the same class that blow up at a finite time $T_+$ an analogous decomposition holds with $\lambda(t) = o(T_+ - t)$. Following an analogous approach, C\^ote~\cite{Cote15} generalized this result to allow for an arbitrary number of bubbles, but at the cost of only establishing the decomposition along a well chosen \emph{sequence of times}. That is,  given a finite energy wave map in the class $\cE_{0, m}$ one can find a linear wave $\bs\psi\lin^*$,  an integer $J \ge m$, a sequence $t_n \to \infty$, scales $\lambda_{J, n}\ll \lambda_{J-1, n} \ll \dots \ll \lambda_{1, n}$, disjoint subsets $\cJ_-, \cJ_+ \subset \{1, \dots J\}$ with $\# \cJ_- + \#\cJ_+ = J$ and $\#\cJ_+ -\# \cJ_- = m$ so that 
\begin{align} \label{eq:seq} 
\bs \psi(t_n) =   \sum_{j \in \cJ_+}  \bs Q_{\lambda_{j, n}}  - \sum_{k \in \cJ_-}  \bs Q_{\lambda_{k, n}} + \bs \psi\lin^*(t_n) + o_{\cE}(1)  \, \, \textrm{as} \,\,  t \to \infty, 
\end{align} 
with an analogous result if $\bs \psi(t)$ blows up in finite time. Later, Jia and Kenig~\cite{JK} generalized the result from~\cite{Cote15} to $k=2$-equivariant wave maps using some different techniques, and we note that a minor technical observation could be used to obtain such sequential  decompositions in all equivariance classes; see Remark~\ref{r:k}. In this paper we will address the question of how to refine such sequential decompositions to ones that hold continuously in time when the sequential decomposition has at most two bubbles; see~\cite{DKM7, DKM8, DKM9} for a complete resolution of this question for the critical radial focusing NLW in odd dimensions.

\subsection{Statement of the results}
We prove continuous time soliton resolution for a class of initial data not covered in \cite{CKLS1, CKLS2, Cote15, JK}.
\begin{mainthm}
\label{thm:main1}
Fix $k \in \{1, 2\}$.
Let $\bs \psi_0 \in \cE$ such that $E(\bs\psi_0) < 4 E( \bs Q) = 16k\pi$, and let $\bs\psi: [0, T_+) \to \cE$ be the corresponding
solution of \eqref{eq:wmk}. One of the following alternatives holds:
\begin{enumerate}[(i)]
\item (Scattering) $T_+ = \infty$ and $\bs \psi(t)$ scatters as $t \to \infty$,
\item (One-bubble blow-up) $T_+ < \infty$, and there exist $\lambda : [0, T_+) \to (0, \infty)$,
$\bs\psi_0^* \in \cE_{0, 1}$ and $\iota \in \{-1, 1\}$ such that $\lambda(t) \ll T_+ - t$ as $t \to T_+$ and
\begin{equation}
\lim_{t\to T_+}\big\|\bs \psi(t) - \iota\big(\bs Q_{\lambda(t)} - \bs \psi_0^*\big)\big\|_{\cE} = 0.
\end{equation}
\item (Two-bubble blow-up) $T_+ < \infty$, and there exist $\lambda, \mu : [0, T_+) \to (0, \infty)$,
$\bs\psi_0^* \in \cE$ and $\iota \in \{-1, 1\}$ such that $\lambda(t) \ll \mu(t) \ll T_+ - t$ as $t \to T_+$ and
\begin{equation}
\lim_{t\to T_+}\big\|\bs \psi(t) - \iota\big(\bs Q_{\lambda(t)} - \bs Q_{\mu(t)} + \bs \psi_0^*\big)\big\|_{\cE} = 0.
\end{equation}
\item (Global two-bubble) $T_+ = \infty$, and there exist $\lambda, \mu : [0, \infty) \to (0, \infty)$,
a solution $\bs\psi\lin^*: [0, \infty) \to \cE$ of \eqref{eq:wmklin}
and $\iota \in \{-1, 1\}$ such that $\lambda(t) \ll \mu(t) \ll t$ as $t \to \infty$ and
\begin{equation}
\lim_{t\to \infty}\big\|\bs \psi(t) - \iota\big(\bs Q_{\lambda(t)} - \bs Q_{\mu(t)} + \bs \psi\lin^*(t)\big)\big\|_{\cE} = 0.
\end{equation}
\end{enumerate}
\end{mainthm}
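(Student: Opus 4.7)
The plan is to take the sequential soliton resolutions of C\^ote~\cite{Cote15} (for $k=1$) and Jia and Kenig~\cite{JK} (for $k=2$) as inputs and upgrade them to continuous-in-time decompositions, via a modulation analysis combined with a \emph{no-return} mechanism. The sequential results produce a sequence $t_n \to T_+$ along which $\bs\psi(t_n)$ is close in $\cE$ to a superposition of bubbles plus a radiation/regular profile. Since $\bs\psi_0 \in \cE_{0,0}$, the bubble signs and the class of the regular/radiation part must be compatible, and the bound $E(\bs\psi_0) < 4E(\bs Q)$ caps the number of bubbles at three. Struwe's bubbling theorem rules out $J=0$ in the finite-time blow-up case, and the class/sign constraints restrict us to four sub-cases matching (i)--(iv): $J=0$ with $T_+=\infty$ (case (i)); $J=1$ with $T_+<\infty$ and regular part in $\cE_{0,\pm 1}$ (case (ii)); $J=2$ with $T_+<\infty$ (case (iii)); and $J=2$ with $T_+=\infty$ (case (iv)). Case (i) is settled by combining the sequential closeness to a free wave with a standard Kenig--Merle perturbation argument.

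In the remaining cases I set up a continuous modulation in a tubular neighborhood $\cU$ of the one- or two-bubble family. Using the implicit function theorem and imposing orthogonality of the error to the scaling direction at each bubble, I define continuous functions $\lambda(t) \le \mu(t)$ and a sign $\iota(t)$, together with a remainder $\bs g(t)$ of small $\cE$-norm. Differentiating the orthogonality conditions in time yields modulation equations expressing $\dot\lambda, \dot\mu$ in terms of the leading bubble--bubble interaction (which is \emph{attractive} for oppositely signed bubbles) plus higher-order quantities controlled by $\|\bs g(t)\|_{\cE}$. The sequential input ensures $\bs\psi(t_n) \in \cU$ and $\bs g(t_n) \to 0$, so the continuous decomposition is already valid on a sequence of intervals accumulating at $T_+$.

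The crux of the argument, and the step I expect to be the main obstacle, is a \emph{no-return} lemma asserting that once $\bs\psi(t)$ enters $\cU$ deeply enough it cannot exit. My approach is to construct a Lyapunov-type functional adapted to the two-bubble ansatz, combining a truncated/localized energy, a virial-type quantity localized near the inner bubble, and an exterior dispersive bound on $\bs g(t)$ outside the light cone of the outer scale. Coercivity of the linearized energy about the two-bubble, modulo the finite-dimensional modulation directions, together with the definite sign of the leading interaction term, should force monotonicity of this functional on every interval on which the decomposition is valid, hence precluding a return from outside $\cU$ between consecutive sequential times. A single-bubble version of the same mechanism handles case (ii).

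Once no-return is in hand, integrating the modulation equations and closing a bootstrap on $\|\bs g(t)\|_{\cE}$ yield the continuous convergence $\bs g(t) \to 0$ in $\cE$ together with the required asymptotic profile. The relations $\lambda(t) \ll \mu(t) \ll T_+ - t$ (or $\ll t$) are then read off from the modulation equations and energy conservation, and determine which of (ii), (iii), or (iv) occurs.
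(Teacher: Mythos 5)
Your reduction of Theorem~\ref{thm:main1} to the sequential results of \cite{Cote15, JK} plus a continuous-time upgrade for the $J=1,2$ cases is the right skeleton, but the entire weight of the argument rests on your ``no-return'' lemma, and as written that lemma is asserted rather than proved. The mechanism you propose --- a Lyapunov/virial functional whose monotonicity follows from coercivity of the linearized energy plus the sign of the bubble--bubble interaction --- does not obviously close: monotonicity ``on every interval on which the decomposition is valid'' says nothing about what happens once the solution exits the tube $\cU$, so it does not preclude the solution leaving $\cU$ between consecutive sequential times and re-entering later. This exit-and-return scenario is exactly the \emph{elastic collision} obstruction; for opposite-sign bubbles the interaction is attractive and threshold two-bubbles genuinely do collide in one time direction, so any no-return statement must encode why such a collision cannot restore the two-bubble shape. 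Implementing a no-return argument of the kind you sketch is the content of the substantial recent works \cite{DKM7, DKM8, DKM9} for radial NLW; it is not a routine bootstrap.

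The paper avoids modulation analysis and no-return entirely, by a softer route built on three inputs you do not use. First, the radiation (respectively the regular part $\bs\psi_0^*$) is extracted for \emph{continuous} time (Lemma~\ref{lem:zero-profile}), which upgrades $E(\bs\phi(t_n)) \to 2E(\bs Q)$ along the sequence to $E(\bs\phi(t)) \to 2E(\bs Q)$ for all $t$, where $\bs\phi = \bs\psi - \bs\psi_0^*$ (or $\bs\psi - \bs\psi\lin^*$). Second, a purely variational fact (Lemma~\ref{lem:two-bubble-norm}): energy close to $2E(\bs Q)$ together with large $\cE$-norm forces $\bfd(\bs\phi(t))$ to be small, so it suffices to show $\|\bs\phi(t)\|_\cE \to \infty$ in continuous time. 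Third, to rule out a sequence $\tau_n$ with $\sup_n\|\bs\phi(\tau_n)\|_\cE < \infty$, one runs a profile decomposition at $\tau_n$ and invokes the threshold classification (Theorem~\ref{thm:threshold}, from \cite{JL1, R19}): any solution with energy exactly $2E(\bs Q)$ scatters in at least one time direction, which either contradicts blow-up/non-scattering (forward scattering) or, via the backward nonlinear profile decomposition, contradicts the sequential two-bubble hypothesis at the times $t_m$ (backward scattering keeps $\bfd$ bounded below). This threshold input is the crucial external ingredient that replaces your no-return lemma; without it, or without an actual construction of your monotone functional, the proposal has a genuine gap at its central step.
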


\begin{remark} \label{r:k} 
We note that an identical result holds in equivariance classes $k \ge 3$, modulo the completion of the proof of a sequential decompositions as in~\eqref{eq:seq}, which was only carried out in full detail in~\cite{Cote15, JK} in the cases $k =1,2$. The missing technical ingredient in their proofs is the observation that an $L^3_tL^6_x$-type Strichartz estimate (see~\eqref{eq:strich} for the precise norm) holds for the linearization of $k$-equivariant  wave maps equation for every $k \ge 1$ using the estimates proved by Planchon, Stalker, Tahvildar-Zadeh~\cite{PST03b} for the radially symmetric wave equation with inverse square potential. 
\end{remark} 

The theorem stated above will easily follow from the sequential soliton resolution of C\^ote~\cite{Cote15},
and Jia and Kenig~\cite{JK}, once we prove the following result, which is our main contribution.
\begin{mainthm}
 \label{thm:main2}
Fix $k \in \{1, 2, 3, \ldots\}$, $m \in \{0, 1, \ldots\}$, and let $\bs\psi: [0, T_+) \to \cE_{0, m}$ be a solution of \eqref{eq:wmk}.
\begin{enumerate}[1.]
\item (Blow-up case.) Assume $T_+ < \infty$, there exists $\bs \psi^*_0 \in \cE_{0, m}$, and a sequence $t_n \to T_+$ such that
$\lambda_n \ll \mu_n \ll T_+ - t_n$ and
\begin{equation}
\lim_{n\to\infty}\big\|\bs \psi(t_n) - \iota\big(\bs Q_{\lambda_n} - \bs Q_{\mu_n} + \bs \psi^*_0\big)\big\|_\cE = 0.
\end{equation}
Then there exist continuous functions $\lambda, \mu: [T_0, T_+) \to (0, \infty)$ such that
$\lambda(t) \ll \mu(t) \ll T_+ - t$ as $t \to T_+$ and
\begin{equation}
\lim_{t\to T_+}\big\|\bs \psi(t) - \iota\big(\bs Q_{\lambda(t)} - \bs Q_{\mu(t)} + \bs \psi^*_0\big)\big\|_\cE = 0.
\end{equation}
\item (Global case.) Assume $T_+ = \infty$, there exists $\bs \psi\lin^*: [0, \infty) \to \cE$
a solution of \eqref{eq:wmklin}, and a sequence $t_n \to \infty$ such that
$\lambda_n \ll \mu_n \ll t_n$ and
\begin{equation}
\lim_{n\to\infty}\big\|\bs \psi(t_n) - \iota\big(\bs Q_{\lambda_n} - \bs Q_{\mu_n} + \bs \psi\lin^*(t_n)\big)\big\|_\cE = 0.
\end{equation}
Then there exist continuous functions $\lambda, \mu: [T_0, \infty) \to (0, \infty)$ such that
$\lambda(t) \ll \mu(t) \ll t$ as $t \to \infty$ and
\begin{equation}
\lim_{t\to\infty}\big\|\bs \psi(t) - \iota\big(\bs Q_{\lambda(t)} - \bs Q_{\mu(t)} + \bs \psi\lin^*(t)\big)\big\|_\cE = 0.
\end{equation}
\end{enumerate}
\end{mainthm}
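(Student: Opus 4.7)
The plan is to upgrade the sequential decomposition to a continuous-in-time one via a modulation plus bootstrap argument. Near any time $s$ at which $\bs\psi(s)$ is sufficiently close in $\cE$ to a two-bubble configuration $\iota(\bs Q_{\lambda_0} - \bs Q_{\mu_0} + \bs\psi^*(s))$ with well-separated scales $\lambda_0 \ll \mu_0$, I would apply the implicit function theorem to produce $C^1$ parameters $\lambda(t), \mu(t)$ and an error $\bs g(t)$ satisfying
\begin{equation}
\bs\psi(t) = \iota\bigl(\bs Q_{\lambda(t)} - \bs Q_{\mu(t)} + \bs\psi^*(t)\bigr) + \bs g(t),
\end{equation}
with $\bs g(t)$ orthogonal to the infinitesimal scaling generators of $\bs Q_\lambda$ at the scales $\lambda(t)$ and $\mu(t)$. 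The sequential hypothesis provides infinitely many times $t_n \to T_+$ near which such a modulated decomposition exists, with $\|\bs g(t_n)\|_\cE \to 0$ and $\lambda(t_n)/\mu(t_n) \to 0$.

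Differentiating the orthogonality conditions along the flow yields modulation equations of the schematic form $\dot\lambda/\lambda$ and $\dot\mu/\mu$ equal to an inner product involving $\bs g$ plus a leading bubble-bubble interaction term of size $(\lambda/\mu)^k$, with a sign reflecting the attractive (opposite-sign) nature of the configuration. In parallel, a localized virial/Morawetz identity, cut off at an intermediate scale of order $\sqrt{\lambda(t)\mu(t)}$, yields a coercive control of $\bs g$ in a time-integrated sense, provided the scale separation $\lambda(t) \ll \mu(t)$ and the natural upper bounds on $\mu(t)$ are maintained. The radiation $\bs\psi^*(t)$ carries energy at scales much larger than $\mu(t)$, so its contribution to the virial identity is expected to be controlled by a standard channel-of-energy / exterior estimate.

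The continuity extension is then a topological argument. Let $\cT \subset [T_0, T_+)$ be the set of times where the modulated decomposition exists and
\begin{equation}
\|\bs g(t)\|_\cE + \lambda(t)/\mu(t) + \mu(t)/(T_+ - t) < \epsilon,
\end{equation}
with $T_+ - t$ replaced by $t$ in the global case. By the implicit function theorem this set is open, and by the sequential hypothesis it accumulates at $T_+$. If $\cT$ were not a full neighborhood of $T_+$, there would exist an exit time $t^\star < T_+$ at which one of the three smallness conditions saturates; on an interval $[t^\star, t_n]$ with $t_n \in \cT$ a nearby sequential time, the modulation equations together with the virial estimate would propagate the smallness of $\|\bs g\|_\cE$ and of the scale ratios from $t_n$ back to $t^\star$, contradicting the saturation.

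The main obstacle, in my expectation, is controlling the scale separation $\lambda(t) \ll \mu(t)$, since the attractive interaction gives no pointwise monotonicity of the ratio $\lambda/\mu$. The key step will be an integrated bound showing that the total variation of $\log(\mu/\lambda)$ over any subinterval is controlled by $\int (\lambda/\mu)^{k/2}\,\vd t$ plus a contribution from $\bs g$, which combined with energy conservation and the a priori bounds from the sequential hypothesis forbids a collision on timescales shorter than those already controlled by the sequential decomposition. A secondary technical point is the presence of the radiation $\bs\psi^*(t)$: its slow decay forces a careful truncation in the modulation and virial computations so that its coupling to the bubble interactions and to the localized virial identity remains genuinely lower order, and this is where the equivariance structure and the inverse-square potential estimates of Planchon, Stalker and Tahvildar-Zadeh enter to make the exterior energy flux quantitative.
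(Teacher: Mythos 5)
Your proposal takes a genuinely different route from the paper, and as written it has real gaps. The paper's proof contains no modulation analysis, no virial/Morawetz identity, and no bootstrap. Instead it is a soft argument: using the continuous-in-time extraction of the radiation (C\^ot\'e's Propositions 5.1 and 5.2, Lemma~\ref{lem:zero-profile} here), one shows that $\bs\phi(t) := \bs\psi(t) - \bs\psi_0^*$ (resp.\ $\bs\psi(t)-\bs\psi\lin^*(t)$) has energy converging to exactly $8k\pi$ in \emph{continuous} time. The whole theorem then reduces to showing $\|\bs\phi(t)\|_\cE \to \infty$, because the variational Lemma~\ref{lem:two-bubble-norm} says that energy near $8k\pi$ plus large $\cE$-norm forces proximity to a two-bubble. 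The divergence of the norm is proved by contradiction via a profile decomposition along a hypothetical bounded subsequence $\tau_n$: the Pythagorean formula leaves either profiles all below the $2E(\bs Q)$ threshold (which scatter, contradicting blow-up/non-scattering) or a single profile at exactly $2E(\bs Q)$, which by the threshold theorem (Theorem~\ref{thm:threshold}, i.e.\ the collision results of \cite{JL1,R19}) scatters in at least one time direction — and each scattering direction is ruled out by either the blow-up or the sequential two-bubble hypothesis. The inelasticity/scattering of threshold two-bubbles is the linchpin, quoted as a black box.

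The concrete gaps in your plan are the following. First, the coercivity of a localized virial functional near an \emph{opposite-sign} two-bubble, and the resulting time-integrated control of $\bs g$, are asserted but not established; for the attractive configuration this is precisely the hard analytic content of the collision papers, not a routine step, and for $k=1,2$ the interaction term $(\lambda/\mu)^k$ carries delicate corrections. Second, and more fundamentally, your exit-time argument does not rule out the elastic-collision scenario that is the central enemy here: between two consecutive sequential times $t_n$ and $t_{n+1}$ the solution could leave the two-bubble neighborhood entirely (so that no modulated decomposition exists and your set $\cT$ is simply not connected up to $T_+$) and then return. Propagating smallness ``backward from $t_n$ to $t^\star$'' presupposes the decomposition survives on that whole interval, which is what needs to be proved. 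Closing this requires either a quantitative no-return lemma (a substantial piece of analysis in its own right) or, as the paper does, invoking the fact that threshold two-bubbles \emph{scatter} in one time direction so that a departed solution can never reconstitute two bubbles. Your outline never uses Theorem~\ref{thm:threshold} or any substitute for it, and without that input the bootstrap cannot close. Third, the claimed total-variation bound on $\log(\mu/\lambda)$ by $\int(\lambda/\mu)^{k/2}\,\vd t$ is exactly the kind of estimate whose proof constitutes the bulk of \cite{JL1}; treating it as a ``key step'' to be supplied later leaves the argument incomplete at its most critical point.
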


\subsection{Comments on the results} 

While the soliton resolution conjecture itself is a qualitative description of the dynamics, it is of central importance to understand which configurations of bubbles and radiation are actually realized by solutions in either the finite time singularity or global-in-time case. 
  Finite-time blow up wave maps with one concentrating bubble were first constructed in a series of influential works by  Krieger, Schlag, Tataru~\cite{KST}, Rodnianski, Sterbenz~\cite{RS}, and Rapha\"el, Rodnianski~\cite{RR}, with the latter work yielding a stable blow-up regime;  see also the recent works~\cite{KrMiao-Duke, KMS}  for stability properties of the solutions from~\cite{KST}, as well as~\cite{JLR1} for a classification of blowup solutions with a given radiation profile, and~\cite{Pil-19, Pil-20} for constructions of various types of solutions with one bubble in infinite time. While these solutions are all constructed within the class $\cE_{0, 1}$, the examples that  blow up in finite time can be smoothly truncated outside the light cone to yield solutions in $\cE$ blowing up in finite time with one bubble, thus the scenario $(ii)$ in Theorem~\ref{thm:main1} is realized.

The first examples of wave maps with two bubbles were constructed by the first author in~\cite{JJ-AJM} in equivariance classes $k \ge 2$.  The solutions in~\cite{JJ-AJM} take the form 
\begin{align} \label{eq:2-bub} 
\bs\psi(t)  = Q_{\lambda(t)} - Q_{\mu(t)} + o_{\cE}(1) \, \, \textrm{as} \,\,  t \to \infty 
 \end{align} 
 with $\lambda(t) \to 0$ and $\mu(t) \to 1$ as $t \to \infty$. The radiation term $ \bs \psi \lin^* = 0$ and thus the solution has threshold energy, i.e., $E( \bs \psi) = 2E( \bs Q)$. In~\cite{JL1} the authors classified the dynamics of every $k$-equivariant wave map with energy $E = 2 E( \bs Q) = 8k\pi$ in \emph{both time directions}, showing, for example, that every such wave map must scatter in at least one time direction. Rodriguez~\cite{R19} proved an analogous result in the case $k=1$ including a construction of a threshold wave map blowing up in finite time in one direction and scattering in the other.  The collision analysis in these papers will play a key role in the proof of Theorem~\ref{thm:main2}; see Section~\ref{ssec:inelastic}. Recently the authors proved that the 2-bubble solution constructed in~\cite{JJ-AJM} is unique and of regularity $H^2$ for classes $k \ge 4$;  see ~\cite{JL2-regularity, JL2-uniqueness}.

C\^ote~\cite{Cote15} observed that a result analogous to Theorem~\ref{thm:main2} holds,
both in the blow-up case and in the global case, if both bubbles have the same sign.
In fact, C\^ote's result allows an arbitrary number of bubbles, all having the same sign.
However, it is worth noting that existence of solutions developing more than one bubble \emph{of the same sign} is unknown.

In the setting of Theorem~\ref{thm:main2}, we know that, at least in the global case,
the set of the initial data satisfying the assumptions
is non-empty, as it contains the two-bubble solution constructed in \cite{JJ-AJM}.
Of course, we expect this set to be much bigger.
Whether the set of initial data satisfying the assumptions of the blow-up case in Theorem~\ref{thm:main2} is non-empty, is unclear to us.
Also, in the case $k = 1$, we do not know if there exist any solutions satisfying the assumptions of Theorem~\ref{thm:main2}.

A natural question is whether our strategy could lead to a proof of soliton resolution for any number of bubbles.
While we believe that studying threshold $N$-bubble solutions for $N \geq 3$ is an interesting topic in itself,
currently it is unknown if this can lead to a proof of soliton resolution in the general case.

\begin{remark}
To be precise, the paper \cite{JJ-AJM} provided a construction for the radial Yang-Mills equation,
which is very similar to equivariant wave maps with $k = 2$.
\end{remark}

\subsection{Comments on the proofs}
\label{ssec:inelastic} 
The following result follows from \cite[Theorem 1.1]{CKLS1}, \cite[Theorem~1.6]{JL1} and \cite[Theorem~1.6]{R19}. 
\begin{theorem}
\label{thm:threshold}
Let $k \in \{1, 2, \ldots\}$.
\begin{enumerate}[1.]
\item
If $\bs\psi$ is a solution of \eqref{eq:wmk} with initial data $\bs \psi_0 \in \cE$ and energy $E(\bs\psi) < 2 E( \bs Q) = 8k\pi$,
then $\psi$ scatters to a linear wave in both time directions.
\item
If $\bs\psi$ is a solution of \eqref{eq:wmk} with initial data $\bs \psi_0 \in \cE$ and  energy $E(\bs\psi) = 2 E( \bs Q) =  8k\pi$,
then $\psi$ scatters to a linear wave in at least one time direction.
\end{enumerate}
\end{theorem}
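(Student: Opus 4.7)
The plan is to treat the two assertions separately. Part~1 (sub-threshold scattering) calls for the Kenig-Merle concentration-compactness and rigidity scheme adapted to equivariant wave maps, as in \cite{CKLS1}. Part~2 (threshold scattering) requires a finer analysis: one reduces, via Struwe's bubbling theorem together with the sub-threshold result, to a pure two-bubble configuration with no radiation, and then rules out bi-directional non-scattering using a modulation/virial argument, along the lines of \cite{JL1} for $k \geq 2$ and \cite{R19} for $k=1$.

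For Part~1, the crucial topological observation is that any non-trivial $k$-equivariant bubble concentration forces the configuration to traverse from $0$ to $\pi$ and back to $0$ in order to remain in $\cE_{0,0}$, so Struwe-type bubbling always costs at least $2E(\bs Q)$ of potential energy. Under the assumption $E(\bs\psi) < 2E(\bs Q)$, this rules out any bubble concentration, either in finite time or along a sequence of times going to infinity. One then argues by contradiction: if scattering failed, a profile decomposition would produce a minimal non-scattering critical element whose orbit is pre-compact modulo the scaling symmetry; a virial/Morawetz identity, combined with the sub-threshold energy bound, would force this critical element to be stationary, hence identically zero, yielding a contradiction.

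For Part~2, I would assume toward contradiction that $\bs\psi$ fails to scatter in both directions. Combining the sequential resolution results (applied in both forward and backward time) with the threshold energy constraint, one sees that in each direction, along some sequence of times, the solution must decompose as $\iota(\bs Q_{\lambda_n} - \bs Q_{\mu_n})$ with $\lambda_n \ll \mu_n$ and \emph{no} radiation, since $2E(\bs Q)$ exactly accommodates two bubbles of opposite sign with nothing left over. One then upgrades this sequential decomposition to a continuous-in-time decomposition through a modulation argument, obtaining continuous scale parameters $\lambda(t)$ and $\mu(t)$ defined for all $t\in\bR$. A refined virial estimate adapted to this two-bubble profile produces a monotone quantity involving $\lambda(t)/\mu(t)$, whose boundedness on the whole real line is incompatible with the dynamics, yielding the contradiction.

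The main obstacle is the modulation and virial step in Part~2. At the threshold energy the total energy equals that of a static two-bubble configuration, so naive virial quantities fail to have a definite sign. Identifying the correct modulation parameters and extracting the required monotonicity requires carefully expanding the Hamiltonian flow around the two-bubble profile and tracking the leading-order bubble-bubble interaction term, whose sign depends subtly on the relative scales. I would follow the analysis of \cite{JL1, R19} directly rather than reproduce it; indeed, the whole point of the statement above is to package these results into a form usable for the proof of Theorem~\ref{thm:main1}.
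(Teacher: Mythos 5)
Your proposal is correct and matches the paper, which gives no independent proof of this theorem but simply records that it follows from \cite[Theorem 1.1]{CKLS1}, \cite[Theorem 1.6]{JL1} and \cite[Theorem 1.6]{R19}; your sketches of the sub-threshold Kenig--Merle argument and of the threshold two-bubble collision analysis accurately describe what those cited works do. As you note yourself, the statement is only a packaging of these known results for use in the proof of Theorem~\ref{thm:main1}.
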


Settling the threshold case $E(\bs \psi) = 8k\pi$ is essential for our proof of continuous time soliton resolution
of two-bubble wave maps. Indeed, one immediate enemy, when one attempts to deduce continuous time soliton resolution
from sequential soliton resolution, is the possibility of \emph{elastic collisions}.
If colliding solitons could recover their shape after a collision, then one could potentially
encounter the following scenario: the solution approaches a multi-soliton configuration for a sequence of times,
but in between infinitely many collisions take place, so that there is no soliton resolution in continuous time.
The threshold case in Theorem~\ref{thm:threshold} shows in particular that two bubbles cannot collide in an elastic manner.

Transforming the intuition described above into a proof might not be immediate,
see for example the recent preprints \cite{DKM7, DKM8, DKM9}.
In our case, however, we know that threshold two-bubbles not only collide in an inelastic way,
but \emph{scatter} in one time direction. As we demonstrate, this very strong information makes the proof of
continuous time soliton resolution almost immediate.

We stress as well that our proofs crucially use the observation from the earlier works cited above
that \emph{the radiation part} of the solution is extracted for continuous time, and not only for a sequence.
We further comment on this issue at the beginning of Section~\ref{sec:proof} below.

%
\subsection{Acknowledgments}
The importance of the collision problem for the continuous time soliton resolution
was explained to us by Frank Merle.

\section{Preliminaries}
\subsection{Profile decomposition}
Our proof is based on the nonlinear profile decomposition method due to Bahouri and G\'erard~\cite{BG},
and Merle and Vega~\cite{MeVe98}.
For the presentation of this theory in the setting of the equivariant wave maps for arbitrary $k$, see~\cite[Section 2.3]{JL1}.
Here, we only state the relevant results.

For a time interval $I$ and $\psi: I \times (0, \infty) \to \bR$, we define the \emph{Strichartz norm}
\begin{equation}
\label{eq:strich} \|\psi\|_{S(I)} := \bigg(\int_I\bigg(\int_0^\infty\frac{\psi(t, r)^6}{r^3}\ud r\bigg)^\frac 12\bigg)^\frac 13.
\end{equation}
If $I = \bR$, we write $S$ instead of $S(\bR)$.
It is worth noting that a solution $\bs\psi$ of \eqref{eq:wmk} scatters for positive times
if and only if $\|\psi\|_{S([T_0, \infty))} < \infty$.

We also introduce the following notation for the scale change: if $\phi \in \cH_{\ell, m}$,
then $\phi_\lambda(r) := \phi(r / \lambda)$ for all $\lambda > 0$;
if $\bs \phi = (\phi, \dot \phi) \in \cE_{\ell, m}$, then $\bs\phi_\lambda(r) := (\phi(r/\lambda), \lambda^{-1}\phi(r/\lambda))$.
\begin{definition}
\label{def:profil}
We say that a bounded sequence $(\bs\psi_n) \subset \cE$ has a \emph{profile decomposition}
with profiles $\bs U^j_0 \in \cE$ and displacements $(\lambda^j_n, t^j_n)$ if the following conditions are satisfied:
\begin{enumerate}
\item if $j \neq j'$, then $\lim_{n\to\infty} \frac{\lambda^j_n}{\lambda^{j'}_n} + \frac{\lambda^{j'}_n}{\lambda^{j}_n}
+ \frac{|t^{j'}_n - t^j_n|}{\lambda^j_n}= \infty$,
\item if $\bs w_{n,0}^J$ is the \emph{remainder term} defined by
\begin{equation}
\bs \psi_n = \sum_{j=1}^J \bs U\lin^j(-t^j_n / \lambda^j_n)_{\lambda^j_n} + \bs w_{n,0}^J,
\end{equation}
then
\begin{equation}
\lim_{J \to \infty}\limsup_{n\to\infty}\|w\linn^J\|_{S} = 0,
\end{equation}
where $\bs U\lin^j: \bR \to \cE$ and $\bs w\linn^J: \bR \to \cE$ are the solutions of \eqref{eq:wmklin}
such that $\bs U\lin^j(0) = \bs U_0$ and $\bs w\linn^J(0) = \bs w_{n,0}^J$.
\end{enumerate}
\end{definition}
\begin{lemma}[Linear Profile Decomposition]
Every bounded sequence $(\bs\psi_n)\subset \cE$ has a subsequence
which has a profile decomposition.\qed
\end{lemma}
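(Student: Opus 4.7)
The plan is to follow the classical Bahouri--G\'erard/Merle--Vega scheme, adapted to the linearized operator $L_0 = -\partial_r^2 - r^{-1}\partial_r + k^2 r^{-2}$. The core inputs we need are (a) the Strichartz bound $\|w\lin\|_S \lesssim \|\bs w_0\|_\cE$ for solutions of \eqref{eq:wmklin}, which follows from Planchon--Stalker--Tahvildar-Zadeh as indicated in Remark~\ref{r:k}, and (b) a \emph{refined} Strichartz inequality that upgrades a lower bound on $\|w\lin\|_S$ to weak concentration at some scale and time. With those in hand, the extraction of profiles and the organization of remainders proceeds by an inductive construction that is essentially formal.

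First I would establish the refined Strichartz inequality: there exist $\theta \in (0,1)$ and $C > 0$ such that for every $\bs\phi_0 \in \cE$,
\begin{equation}
\|\phi\lin\|_S \le C\,\|\bs\phi_0\|_\cE^{1-\theta}\,\sup_{\lambda > 0,\,t \in \bR}\bigg(\int_0^\infty \frac{|P_\lambda \phi\lin(t,r)|^6}{r^3}\,\ud r\bigg)^{\frac{\theta}{6}},
\end{equation}
where $P_\lambda$ is a Littlewood--Paley-type projector compatible with the spectral decomposition of $L_0$ (equivalently, with the dilation structure on radial waves in the conjugate dimension $2k+2$). Together with the extremizer of $L^6(r^{-3}\ud r)$ and Sobolev embedding, this permits the following extraction: if $(\bs w_n) \subset \cE$ is bounded and $\limsup_n\|w\lin_n\|_S \ge \delta > 0$, then after passing to a subsequence there exist parameters $(\lambda_n, t_n)$ and a nontrivial $\bs U_0 \in \cE$ such that
\begin{equation}
\bs w\lin_n(t_n)_{1/\lambda_n} \wto \bs U_0 \quad \text{in } \cE,\qquad \|\bs U_0\|_\cE \ge c\,\delta^{1/\theta}\|\bs w_n\|_\cE^{-(1-\theta)/\theta}.
\end{equation}

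Next I would build the profile decomposition inductively. Set $\bs w^0_{n,0} := \bs\psi_n$ and, at each step, extract from $\bs w^J_{n,0}$ a profile of energy within a factor $1/2$ of the supremum of possible extracted energies. Define
\begin{equation}
\bs w^J_{n,0} := \bs\psi_n - \sum_{j=1}^J \bs U^j\lin(-t^j_n/\lambda^j_n)_{\lambda^j_n}.
\end{equation}
Pairwise orthogonality of $(\lambda^j_n, t^j_n)$ is forced by weak convergence: if two parameter pairs were not orthogonal, a diagonal argument would absorb one profile into another and contradict the definition of $\bs U^j_0$ as a weak limit of the remainder at step $j-1$. Using orthogonality, the standard Hilbert-space Pythagorean identity (proved by expanding the squared $\cE$-norm and using weak convergence to kill the cross terms) gives
\begin{equation}
\|\bs\psi_n\|_\cE^2 = \sum_{j=1}^J \|\bs U^j_0\|_\cE^2 + \|\bs w^J_{n,0}\|_\cE^2 + o_n(1),
\end{equation}
so $\sum_j \|\bs U^j_0\|_\cE^2 \le \limsup_n\|\bs\psi_n\|_\cE^2 < \infty$, and in particular $\|\bs U^j_0\|_\cE \to 0$. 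A diagonal extraction then yields a single subsequence that works simultaneously for all $J$.

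Finally, I would show $\limsup_n \|w^J\lin_n\|_S \to 0$ as $J \to \infty$. Since the energy available for future profiles is $\limsup_n \|\bs w^J_{n,0}\|_\cE^2$, which is uniformly bounded, and since at each step we extracted a near-maximal profile, the maximal extractable energy from $\bs w^J_{n,0}$ must tend to $0$ as $J \to \infty$ (otherwise a nontrivial profile would survive at every step, contradicting summability of $\|\bs U^j_0\|_\cE^2$). The contrapositive of the extraction statement from Step~1 then forces $\limsup_n \|w^J\lin_n\|_S \to 0$. The main obstacle is the refined Strichartz inequality: producing the right Littlewood--Paley calculus adapted to the Bessel-type potential $k^2/r^2$ (equivalently, working on radial waves in an even dimension $2k+2$) is what makes this step nontrivial, but it is already available in the references cited, and the rest of the argument is soft functional analysis.
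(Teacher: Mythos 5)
The paper does not prove this lemma: it is stated with a reference to the presentation in \cite[Section 2.3]{JL1} and ultimately to Bahouri--G\'erard \cite{BG} and Merle--Vega \cite{MeVe98}, so there is no in-paper argument to compare against line by line. Your outline is the standard Bahouri--G\'erard scheme (refined Strichartz $\Rightarrow$ concentration $\Rightarrow$ inductive weak-limit extraction $\Rightarrow$ orthogonality and almost-Pythagorean expansion $\Rightarrow$ vanishing of the remainder in $S$), correctly specialized to the equivariant setting in that you only allow scalings and time translations and no spatial translations. Steps 2--4 of your argument are indeed soft and standard once Step 1 is in place.

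The one place where the proposal is genuinely thin is exactly the place you flag: the refined Strichartz inequality adapted to $L_0 = -\partial_r^2 - r^{-1}\partial_r + k^2 r^{-2}$. The reference \cite{PST03b} invoked in Remark~\ref{r:k} supplies the \emph{plain} Strichartz estimate $\|\psi\lin\|_S \lesssim \|\bs\psi_0\|_\cE$ for the inverse-square potential, but not the refined (Besov-type) version with a concentration factor, so the claim that the refinement is ``already available in the references cited'' is not accurate as stated, and building a Littlewood--Paley calculus for $L_0$ from scratch is avoidable work. The efficient route --- and the one the cited literature takes --- is to observe that the substitution $u = \psi / r^k$ conjugates \eqref{eq:wmklin} to the free radial wave equation in $\bR^{2k+2}$ and identifies $\cH$ isometrically (up to constants) with $\dot H^1_{\mathrm{rad}}(\bR^{2k+2})$; the $S$-norm becomes a weighted $L^3_t L^6_x$ norm (unweighted precisely when $k=1$), and the refined Sobolev/Strichartz inequalities and the full profile decomposition are then classical for free radial waves in even dimension $\geq 4$. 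Your parenthetical remark about ``the conjugate dimension $2k+2$'' shows you see this; I would make that conjugation the backbone of Step 1 rather than an aside, since it converts the only nontrivial analytic input into a citation of known results for the free wave equation. With that modification the proof is complete and agrees with the one the paper relies on.
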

Without loss of generality, upon taking subsequences and modifying the profiles, one can assume that
for all $j$ one of the following holds: $t^j_n = 0$ for all $n$, $\lim_{n\to \infty}t^j_n / \lambda^j_n = \infty$,
$\lim_{n\to \infty}t^j_n / \lambda^j_n = -\infty$.
The \emph{nonlinear profile} $\bs U^j$ associated with the profile $\bs U^j_0$ is a solution of \eqref{eq:wmk}
defined by the condition
\begin{equation}
\lim_{n\to\infty}\|\bs U^j(-t^j_n / \lambda^j_n) - \bs U\linn^j(-t^j_n / \lambda^j_n)\|_\cE = 0.
\end{equation}
\begin{lemma}[Nonlinear Profile Decomposition]
\label{lem:nlprof}
Let $\bs \psi_{n, 0} \in \cE$ be a bounded sequence with a profile decomposition,
and let $\bs U^j$ be the associated nonlinear profiles, with the maximal forward time of existence $T_+(\bs U^j)$.
The following ``Pythagorean formula'' holds:
\begin{equation}
\label{eq:pytha}
\lim_{J\to\infty}\limsup_{n \to \infty}\Big|E(\bs \psi_{n, 0}) - \sum_{j=1}^J E(\bs U^j) - \|\bs w_{n, 0}^J\|_\cE^2\Big| = 0.
\end{equation}
Furthermore, let $s_n \in (0, \infty)$ be any sequence such that for all $j$ and $n$
\begin{equation}
\frac{s_n - t_{n}^j}{\lambda_{n}^j} < T_+(\bs U^j), \quad \limsup_{n \to \infty} \|U^j\|_{S(I_n^j)} <\infty, \quad\text{where }I_n^j := \Big[{-}\frac{t_{n}^j}{\lambda_{n}^j}, \frac{s_n - t_{n}^j}{\lambda_{n}^j}\Big].
\end{equation}
Let $\bs\psi_n(t)$ denote the solution of \eqref{eq:wmk} with initial data $\bs\psi_n(0) = \bs \psi_{n, 0}$.  Then for $n$ large enough  $\bs \psi_n(t)$ exists on the interval $s \in [0, s_n]$ and satisfies, 
\begin{equation}
\limsup_{n \to \infty} \|\psi_n\|_{S([0, s_n])} < \infty.
\end{equation}
Moreover, the following nonlinear profile decomposition holds for all $s \in [0, s_n]$,  
\begin{equation}
\bs\psi_n(s) = \sum_{j=1}^J \bs U^j\Big( \frac{s- t_{n}^j}{\lambda_{n}^j}\Big)_{\lambda_{n}^j} + \bs w\linn^{J}(s)+ \bs g_n^J(s)
\end{equation}
with  $\bs w\linn^J(t)$ as in Definition~\ref{def:profil} and 
\begin{equation}
\label{eq:nlerror}
\lim_{J \to \infty} \limsup_{n\to \infty} \left( \|g_n^J\|_{S([0, s_n])} + \|\bs g_n^J\|_{L^{\infty}([0, s_n]; \cE)} \right) =0.
\end{equation}
The analogous statement holds for sequences $s_n\in (-\infty, 0)$. \qed
\end{lemma}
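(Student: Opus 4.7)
The plan is to run the standard Bahouri--G\'erard--Merle--Vega nonlinear profile scheme adapted to $k$-equivariant wave maps. The three ingredients are the Pythagorean energy identity \eqref{eq:pytha}, the construction of an explicit approximate solution built from the nonlinear profiles and the linear dispersive remainder, and a long-time perturbation (``stability'') lemma for~\eqref{eq:wmk} of the type developed in \cite[Section 2.3]{JL1}. The backward-in-time assertion will follow from the forward one by the time-reversal symmetry of~\eqref{eq:wmk}, so I will focus on $s_n > 0$.

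First I would establish \eqref{eq:pytha}. Write $E(\bs\psi) = \tfrac12\|\dot\psi\|_{L^2}^2 + Q(\psi) + R(\psi)$, where $Q(\psi) := \pi\int_0^\infty \big((\partial_r\psi)^2 + k^2 \psi^2/r^2\big)\, r\,\vd r$ and $R(\psi) := \pi k^2 \int_0^\infty (\sin^2\psi - \psi^2)/r^2\, r\,\vd r$. The quadratic pieces are scale invariant, and their cross terms vanish as $n\to\infty$ after a change of variables into the frame of any fixed profile, by the pseudo-orthogonality of $(\lambda^j_n, t^j_n)$. For $R$, a Taylor expansion of $\sin^2$ produces a quartic-and-higher expression whose single-profile contribution is a scale-invariant constant depending only on $\bs U^j_0$, while the cross terms again vanish by the same pseudo-orthogonality. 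Finally, replacing $\bs U\linn^j(-t^j_n/\lambda^j_n)$ by $\bs U^j(-t^j_n/\lambda^j_n)$ in the resulting identity uses only the defining convergence of the nonlinear profile together with conservation of energy along~\eqref{eq:wmk}.

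Next I would set
\begin{equation}
\bs\psi_n^J(s) := \sum_{j=1}^J \bs U^j\Big(\frac{s - t^j_n}{\lambda^j_n}\Big)_{\lambda^j_n} + \bs w\linn^J(s)
\end{equation}
and compute the error $\mathrm{Err}_n^J := \partial_s \bs\psi_n^J - J\circ\vD E(\bs\psi_n^J)$. Since each $\bs U^j$ solves \eqref{eq:wmk} and $\bs w\linn^J$ solves \eqref{eq:wmklin}, the error reduces to the mismatch between $N(\psi_n^J)$ and the sum of the nonlinearities evaluated at each profile, with $N(\psi) := k^2 r^{-2}\sin(2\psi)/2$. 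Expanding $N$ around zero and grouping terms by the number of distinct profile indices and the number of factors of $w\lin^J$ they involve, I would estimate each piece in the dual Strichartz norm: the pure single-profile terms cancel; terms involving at least two distinct profiles tend to zero as $n\to\infty$ for fixed $J$ via a dyadic localization in $r$ at the scales $\lambda^j_n$ combined with pseudo-orthogonality; and terms with at least one factor of $w\lin^J$ vanish in the double limit by $\|w\linn^J\|_S \to 0$. A stability lemma for~\eqref{eq:wmk} applied to $\bs\psi_n^J$ on $[0, s_n]$, together with the observation that $\|\psi_n^J\|_{S([0, s_n])}$ remains bounded uniformly in $n$ and $J$ (by the hypothesis $\limsup_n\|U^j\|_{S(I^j_n)} < \infty$ and pseudo-orthogonality), then yields the existence of $\bs\psi_n$ on $[0, s_n]$ with the desired Strichartz bound and the vanishing remainder \eqref{eq:nlerror}.

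The main obstacle is the cross-term estimate, which requires the full pseudo-orthogonality of the parameters to be exploited while navigating the singular weight $k^2/r^2$ in $N$. The required argument is by now routine in the equivariant wave maps literature, and rests on the Strichartz estimate for~\eqref{eq:wmklin} in the norm \eqref{eq:strich} valid for every $k \ge 1$, as recalled in Remark~\ref{r:k}.
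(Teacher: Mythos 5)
The paper offers no proof of this lemma: it is stated with a \verb|\qed| and delegated entirely to the nonlinear profile decomposition theory of \cite[Section 2.3]{JL1} (following Bahouri--G\'erard and the Kenig--Merle scheme). Your outline --- Pythagorean expansion via pseudo-orthogonality of the parameters, construction of the approximate solution $\sum_j \bs U^j(\cdot)_{\lambda^j_n} + \bs w\linn^J$, cross-term and remainder estimates in the dual Strichartz norm, and a long-time perturbation lemma --- is precisely the argument carried out in that reference, so it is correct and essentially the same approach (modulo a harmless constant: with the paper's normalization the kinetic term in $E$ is $\pi\|\dot\psi\|_{L^2}^2$, not $\tfrac12\|\dot\psi\|_{L^2}^2$).
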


\subsection{Bubbles and two-bubbles}
In this section, we state a few useful facts about states $\bs \psi_0 \in \cE$ which are close to a two-bubble.

First, we recall the following variational characterization of $Q$ in  $\cH_{0, 1}$ from~\cite{Cote}, which amounts to the coercivity of the energy functional near $Q$. 
\begin{lemma}\emph{\cite[Proposition $2.3$]{Cote}}
\label{l:decQ}
For any $\epsilon > 0$ there exists $\delta > 0$ such that
if $\psi_0 \in \cH_{0, 1}$ and $E_p(\psi_0) \leq 4k\pi + \delta$,
then there exists $\lambda > 0$ such that $\|\psi_0 - Q_\lambda\|_{\cH} \leq \epsilon$. \qed
\end{lemma}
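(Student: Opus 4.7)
The strategy is a contradiction-compactness argument built around the Bogomolnyi factorization of $E_p$. Expanding the pointwise square $(\partial_r\psi - k\sin\psi/r)^2$ and integrating against $r\,\vd r$, the cross term
\[
2k\int_0^\infty \sin(\psi)\partial_r\psi\,\vd r = 2k\big[{-}\cos\psi\big]_0^\infty = 4k
\]
is a purely topological constant for every $\psi \in \cH_{0,1}$ (using the boundary values $\psi(0) = 0$ and $\psi(\infty) = \pi$), which yields the identity
\[
E_p(\psi) = 4k\pi + \pi\int_0^\infty \big(\partial_r\psi - k\sin\psi/r\big)^2 \,r\,\vd r, \qquad \psi \in \cH_{0,1}.
\]
This establishes the sharp lower bound $E_p \ge 4k\pi$ and identifies the minimizers as solutions of the first-order harmonic-map ODE $\partial_r\psi = k\sin\psi/r$, namely the family $\{Q_\lambda\}_{\lambda > 0}$.

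Assume the lemma fails: there exist $\epsilon_0 > 0$ and $\psi_n \in \cH_{0,1}$ with $E_p(\psi_n) \to 4k\pi$ but $\inf_{\lambda > 0}\|\psi_n - Q_\lambda\|_\cH \ge \epsilon_0$. The Bogomolnyi identity forces $f_n := \partial_r\psi_n - k\sin\psi_n/r \to 0$ in $L^2(r\,\vd r)$. Since $\psi_n$ is continuous on $(0, \infty)$ with boundary values $0$ and $\pi$, the intermediate value theorem supplies $\lambda_n > 0$ with $\psi_n(\lambda_n) = \pi/2$; using the scale invariance of both the separation condition and the Bogomolnyi remainder, rescale to $\lambda_n = 1$, so the rescaled sequence (still denoted $\psi_n$) satisfies $\psi_n(1) = \pi/2$. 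The difference $h_n := \psi_n - Q \in \cH$ is $\cH$-bounded by Hardy's inequality and the energy control, so after extraction $h_n \wto h_\infty$ weakly in $\cH$, and by equivariant Rellich compactness on each annulus $\{\delta < r < \delta^{-1}\}$, $\psi_n \to \psi_\infty := Q + h_\infty$ locally uniformly on $(0, \infty)$. Passing to the limit in $\partial_r\psi_n = f_n + k\sin\psi_n/r$ shows $\partial_r\psi_\infty = k\sin\psi_\infty/r$, and the pinning $\psi_\infty(1) = \pi/2$ forces $\psi_\infty = Q$, hence $h_\infty = 0$.

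The main obstacle is to upgrade this weak convergence $h_n \wto 0$ to strong $\cH$-convergence, since only the latter contradicts $\|h_n\|_\cH \ge \epsilon_0$. The plan is to linearize: subtracting $\partial_r Q = k\sin Q/r$ from the Bogomolnyi relation and using $\sin\psi_n - \sin Q = \cos(Q)\,h_n + O(h_n^2)$ gives $T h_n = f_n + O(h_n^2/r)$ in $L^2(r\,\vd r)$, where $Th := \partial_r h - k\cos(Q)h/r$. A direct computation shows $T^\ast T = L_Q$ with $L_Q := -\partial_r^2 - r^{-1}\partial_r + k^2\cos(2Q)/r^2$, the linearized potential-energy Hessian at $Q$; this operator is nonnegative on $\cH$ with one-dimensional kernel spanned by the scaling generator $r\partial_r Q$, and coercive on the orthogonal complement. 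The pinning $h_n(1) = 0$ defines a closed complement to $\spn\{r\partial_r Q\}$ in $\cH$ (since $(r\partial_r Q)(1) = k \neq 0$), so decomposing $h_n = \alpha_n\, r\partial_r Q + h_n^\perp$, a short bootstrap absorbing the quadratic remainder yields $\alpha_n \to 0$ and $\|h_n^\perp\|_\cH \to 0$, hence $\|h_n\|_\cH \to 0$, the desired contradiction. The coercivity-plus-bootstrap step is the one substantive technical point; the rest is standard equivariant concentration-compactness.
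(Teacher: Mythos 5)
Your overall architecture (Bogomolnyi factorization, scaling normalization $\psi_n(1)=\pi/2$, contradiction--compactness) is the standard and correct skeleton for this lemma, but two of your steps do not go through as written, and both fail for the same underlying reason. First, the $\cH$-boundedness of $h_n=\psi_n-Q$ does not follow from ``Hardy's inequality'': the two-dimensional Hardy inequality $\int_0^\infty h^2 r^{-1}\ud r\lesssim\int_0^\infty(\partial_r h)^2\,r\ud r$ is false, and the potential energy only controls $\int_0^\infty \sin^2(\psi_n)\,r^{-1}\ud r$, which bounds $\int \psi_n^2\,r^{-1}\ud r$ near $r=0$ (resp.\ $\int(\pi-\psi_n)^2 r^{-1}\ud r$ near $r=\infty$) only once you know $\psi_n$ stays away from $\pi$ (resp.\ from $0$) in that region. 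Ruling out such excursions is precisely where the topological bound must be used quantitatively: setting $G(\psi):=\int_0^\psi|\sin u|\ud u$, the pointwise inequality $(\partial_r\psi)^2+k^2\sin^2(\psi)/r^2\ge 2k|\partial_r\psi||\sin\psi|/r$ gives $E_p(\psi;a,b)\ge 2\pi k\,|G(\psi(b))-G(\psi(a))|$; since the full transition from $0$ to $\pi$ already costs $2\pi k\,G(\pi)=4\pi k$, the hypothesis $E_p(\psi_n)\le 4k\pi+\delta_n$ leaves only $O(\delta_n)$ of energy for backtracking, which localizes the transition near $r=1$ and yields $\sup_{r\le\rho}\psi_n\le G^{-1}(G(Q(\rho))+o(1))$ together with its analogue at infinity. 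This step is absent from your write-up, and without it neither the boundedness of $h_n$ nor the control of the tails is available.

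Second, and more seriously, the concluding ``bootstrap absorbing the quadratic remainder'' is circular. The error in $Th_n=f_n+O(h_n^2/r)$ has $L^2(r\ud r)$-size $\lesssim\|h_n\|_{L^\infty}\|h_n\|_{\cH}$, so absorbing it into the coercive term $c\|h_n\|_{\cH}$ requires $\|h_n\|_{L^\infty}$ (or $\|h_n\|_{\cH}$) to be small \emph{a priori}; but your $h_n$ are only known to be bounded and weakly null, which gives smallness on compact subsets of $(0,\infty)$ and says nothing about $r\to0$ or $r\to\infty$. The missing tail smallness is again exactly the $G$-function energy-localization above. Moreover, once that argument is in place, the $\cH$-norm of $h_n$ on $\{r\le\rho\}\cup\{r\ge 1/\rho\}$ is already uniformly small, and combined with the local uniform convergence $\psi_n\to Q$ this gives $\|h_n\|_{\cH}\to0$ directly --- the linearized operator $T$, the identity $T^*T=L_Q$, and the coercivity of $L_Q$ on the hyperplane $\{h(1)=0\}$ (which you assert but do not prove) become unnecessary. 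In short, the substantive missing ingredient is not the coercivity step you flag at the end, but the quantitative use of the degree lower bound to control $\psi_n$ outside a fixed transition region; that is the heart of the cited proof.
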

Next, we consider two-bubble configurations.
\begin{definition}
\label{def:ddef}
Given a map $\bs\psi_0 \in \cE$,
we define its proximity $\bfd_+(\bs \psi_0)$ to a positive pure $2$-bubble
and its proximity $\bfd_-(\bs \psi_0)$ to a negative pure $2$-bubble by 
\begin{equation}
\label{eq:ddef} 
\bfd_\pm(\bs \psi_0) := \inf_{\lambda, \mu >0}  \big(  \| \bs\psi_0 \mp (\bs Q_\lambda - \bs Q_\mu) \|_{\cE}^2 + \left( \lambda/\mu \right)^k \big).
\end{equation}
We also set
\begin{equation}
\bfd(\bs \psi_0) := \min(\bfd_+(\bs \psi_0), \bfd_-(\bs \psi_0)).
\end{equation}
\end{definition}

\begin{lemma}
\label{lem:two-bubble-norm}
For any $\epsilon > 0$ there exists $\delta > 0$ such that
if $\bs\psi_0 \in \cE$, $E(\bs\psi_0) \leq 8k\pi + \delta$ and $\|\bs \psi_0\|_{\cE} \geq \delta^{-1}$,
then $\bfd(\bs \psi_0) \leq \epsilon$.
\end{lemma}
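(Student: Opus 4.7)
The plan is to argue by contradiction using a bubble (nonlinear profile) decomposition for $k$-equivariant maps of bounded potential energy. Assume the conclusion fails: there exist $\epsilon_0 > 0$ and a sequence $\bs\psi_{0,n} = (\psi_{0,n}, \dot\psi_{0,n}) \in \cE$ with $E(\bs\psi_{0,n}) \to 8k\pi$ and $\|\bs\psi_{0,n}\|_\cE \to \infty$, yet $\bfd(\bs\psi_{0,n}) \geq \epsilon_0$ for every $n$. Since $\pi\|\dot\psi_{0,n}\|_{L^2}^2 \leq E(\bs\psi_{0,n})$ stays bounded, the divergence occurs in the static part, $\|\psi_{0,n}\|_\cH \to \infty$, while $E_p(\psi_{0,n}) \leq E(\bs\psi_{0,n})$ remains bounded. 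Thus the $\cH$-blow-up must be driven by concentration of harmonic-map profiles.

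Apply the standard $k$-equivariant bubble decomposition for sequences in $\cH_{0,0}$ of bounded potential energy (Struwe-type bubbling, as employed in \cite{Struwe, Cote15} and underlying Lemma \ref{l:decQ}). Up to a subsequence, there exist an integer $J \geq 0$, signs $\iota_j \in \{\pm 1\}$, and scales $0 < \lambda_{J,n} \ll \cdots \ll \lambda_{1,n}$ such that
\begin{equation*}
\psi_{0,n} = \sum_{j=1}^J \iota_j Q_{\lambda_{j,n}} + r_n, \qquad \|r_n\|_\cH \to 0,
\end{equation*}
together with the Pythagorean identity $E_p(\psi_{0,n}) = 4 k \pi J + o(1)$. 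The boundary conditions $\psi_{0,n}(0^+) = \psi_{0,n}(\infty) = 0$ enforce the topological constraint $\iota_1 + \cdots + \iota_J = 0$.

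The bound $E_p(\psi_{0,n}) \leq 8k\pi + o(1)$ forces $J \in \{0,1,2\}$. If $J = 0$ then $\|\psi_{0,n}\|_\cH \to 0$, contradicting the divergence; if $J = 1$ then $\iota_1 = 0$, which is impossible. Hence $J = 2$ and $\iota_1 = -\iota_2 =: -\iota \in \{\pm 1\}$. Writing $\lambda_n := \lambda_{2,n}$ and $\mu_n := \lambda_{1,n}$, we have $\lambda_n/\mu_n \to 0$ and $\|\psi_{0,n} - \iota(Q_{\lambda_n} - Q_{\mu_n})\|_\cH \to 0$. Saturation of the energy, $E_p(\psi_{0,n}) \to 8k\pi = \lim E(\bs\psi_{0,n})$, forces $\|\dot\psi_{0,n}\|_{L^2} \to 0$. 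Combining these,
\begin{equation*}
\bfd_\iota(\bs\psi_{0,n}) \leq \|\bs\psi_{0,n} - \iota(\bs Q_{\lambda_n} - \bs Q_{\mu_n})\|_\cE^2 + (\lambda_n/\mu_n)^k \longrightarrow 0,
\end{equation*}
contradicting $\bfd(\bs\psi_{0,n}) \geq \epsilon_0$.

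The main technical point is extracting the bubble decomposition in the precise form required, with the remainder $r_n$ going to zero in the full $\cH$-norm rather than only in potential energy (which is what a naive Pythagorean identity provides). This is standard in the equivariant setting: once all concentration profiles have been subtracted, the residue $r_n \in \cH_{0,0}$ is small pointwise, so that the linearized density $(\partial_r r_n)^2 + k^2 r_n^2/r^2$ is comparable to its nonlinear counterpart $(\partial_r r_n)^2 + k^2 \sin^2 r_n/r^2$, upgrading $E_p(r_n) \to 0$ to $\|r_n\|_\cH \to 0$.
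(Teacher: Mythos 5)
Your overall strategy --- contradiction, reduction to the potential part, identification of two opposite-sign bubbles, then coercivity of $E_p$ near $Q$ --- is the right one, and it is in the spirit of the argument the paper actually invokes (the proof of Lemma~2.13 of \cite{JL1}, to which the paper defers). However, the step you lean on is not a theorem. There is no ``standard $k$-equivariant bubble decomposition'' for arbitrary sequences in $\cH_{0,0}$ of bounded potential energy producing $\psi_{0,n} = \sum_j \iota_j Q_{\lambda_{j,n}} + r_n$ with $\|r_n\|_{\cH} \to 0$ and $E_p(\psi_{0,n}) = 4k\pi J + o(1)$: a constant sequence $\psi_{0,n} = \phi$ with $\phi$ a fixed non-harmonic map of small potential energy already violates it (no profiles can be extracted, yet $E_p(\phi) \neq 0$). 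Struwe-type bubbling applies to almost-harmonic (Palais--Smale) sequences, and Bahouri--G\'erard profile decompositions require a bound on the full $\cH$-norm (which here diverges by hypothesis) and produce general profiles, not $\pm Q$. In the regime you need --- $E_p \leq 8k\pi + o(1)$ together with $\|\psi_{0,n}\|_{\cH} \to \infty$ --- the asserted decomposition is essentially the conclusion of Lemma~\ref{lem:two-bubble-norm} itself, so invoking it is circular.

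What has to be done instead, and what the cited argument actually does, is to extract the structure from the tension between $\int_0^\infty k^2 \psi_{0,n}^2 r^{-2}\, r\,\vd r \to \infty$ and $\int_0^\infty k^2 \sin^2(\psi_{0,n}) r^{-2}\, r\,\vd r \leq E_p(\bs\psi_{0,n})/\pi \leq 8k + o(1)$: these force $\psi_{0,n}$ to remain near a nonzero multiple of $\pi$ on an annulus $\lambda_n' \leq r \leq \mu_n'$ with $\mu_n'/\lambda_n' \to \infty$. The Bogomolny-type lower bound $E_p(\psi; [r_1, r_2]) \geq 2\pi k\,\big|\!\int_{\psi(r_1)}^{\psi(r_2)} |\sin u|\,\vd u\big|$ then shows that the transition from $0$ to that multiple of $\pi$ on $[0,\lambda_n']$ and the transition back on $[\mu_n', \infty)$ each cost at least $4k\pi - o(1)$, which (i) pins the multiple to $\pm\pi$, (ii) shows each piece has potential energy $4k\pi + o(1)$ and that the kinetic energy vanishes in the limit, and (iii) puts each piece, after reflection and gluing, into the hypotheses of the variational characterization Lemma~\ref{l:decQ} (\cite[Proposition 2.3]{Cote}), yielding the inner bubble $\iota Q_{\lambda_n}$ and outer bubble $-\iota Q_{\mu_n}$ with $\lambda_n/\mu_n \to 0$. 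Your closing remarks (the topological constraint on the signs, the upgrade from smallness of $E_p(r_n)$ to smallness of $\|r_n\|_{\cH}$ via $\|r_n\|_{L^\infty} \to 0$, and the vanishing of the kinetic part) are all fine once this central step is in place; it is the central step that is missing.
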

\begin{proof}
The result follows from the proof of Lemma~2.13 in \cite{JL1}, with minor modifications left to the Reader.
\end{proof}

\section{Proofs of the theorems}
\label{sec:proof}
Our proof of Theorem~\ref{thm:main2} uses the observation from~\cite{CKLS1, CKLS2, Cote15} 
that the sequential decomposition provides one profile which is independent of the time sequence:
$\bs \psi_0^*$ in the blow-up case and $\bs\psi\lin^*$ in the global case.
Before stating the claim, we introduce the notation
\begin{equation}
\begin{aligned}
\|\bs\psi_0\|_{\cE(r \leq \rho)}^2 = \|(\psi_0, \dot \psi_0)\|_{\cE(r \leq \rho)}^2 &:= \int_0^\rho  \Big(\dot \psi_0(r)^2 + (\partial_r \psi_0(r))^2 +  k^2 \frac{\psi_0(r)^2}{r^2}  \Big) \, r \, \vd r, \\
\|\bs\psi_0\|_{\cE(r \geq \rho)}^2 = \|(\psi_0, \dot \psi_0)\|_{\cE(r \geq \rho)}^2 &:= \int_\rho^\infty  \Big(\dot \psi_0(r)^2 + (\partial_r \psi_0(r))^2 +  k^2 \frac{\psi_0(r)^2}{r^2}  \Big) \, r \, \vd r.
\end{aligned}
\end{equation}
\begin{lemma}\cite[Propositions 5.1 and 5.2]{Cote15}
 \label{lem:zero-profile}
Fix $k \in \{1, 2, 3, \ldots\}$, $\ell, m \in \{0, 1, \ldots\}$, and let $\bs\psi: [0, T_+) \to \cE_{\ell, m}$ be a solution of \eqref{eq:wmk}.
\begin{enumerate}[1.]
\item (Blow-up case.) Assume $T_+ < \infty$.
Then there exists $p \in \bZ$ and $\bs \psi_0^* \in \cE_{p, m}$ such that $\bs\phi(t) := \bs\psi(t) - \bs\psi_0^*$ satisfies
\begin{equation}
\label{eq:zero-profile-1}
\lim_{t\to T_+}\|\bs \phi(t)\|_{\cE(r \geq T_0 - t)} = 0.
\end{equation}
\item (Global case.) Assume $T_+ = \infty$.
Then there exists a solution $\bs\psi\lin^*: \bR \to \cE$ of \eqref{eq:wmklin}
and an increasing function $A: [0, \infty) \to [0, \infty)$ such that $\lim_{t\to \infty} A(t) = \infty$,  and $\bs\phi(t) := \bs\psi(t) - \bs\psi\lin^*(t)$ satisfies
\begin{equation}
\label{eq:zero-profile-2}
\lim_{t\to \infty}\|\bs \phi(t) - (\pi m, 0)\|_{\cE(r \geq t - A(t))} = 0.
\end{equation}
\end{enumerate}\qed
\end{lemma}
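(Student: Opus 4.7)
The lemma separates $\bs\psi$ into an interior piece, concentrated near the singular point (and absorbing the bubbles), and an exterior piece that behaves regularly: a finite-energy static profile $\bs\psi_0^*$ in the blow-up case, and a linear wave $\bs\psi\lin^*$ in the global case. I would follow the classical road map (Shatah--Struwe; Christodoulou--Tahvildar-Zadeh; Duyckaerts--Kenig--Merle; C\^ote): \emph{first} build the exterior object from the far-field behaviour of the solution, \emph{then} show it is independent of the sequence of times chosen, and \emph{finally} upgrade the convergence to the $\cE$-norm on the relevant exterior region.

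\textbf{Blow-up case.} My plan is to construct $\bs\psi_0^*$ by finite speed of propagation together with the regularity of $\bs\psi$ away from the singular point $(T_+, 0)$. Fix $r_0 > 0$. For any pair of times $t_1, t_2$ with $T_+ - t_i < r_0/4$, the traces $\bs\psi(t_i)\rstr_{[r_0, \infty)}$ are obtained by propagating, from an intermediate regular time, data supported on a region uniformly bounded away from $(T_+, 0)$, on which $\bs\psi$ is smooth. This gives a Cauchy criterion producing a pointwise limit $\bs\psi_0^*(r)$ for every $r>0$; the limit as $r\to 0^+$ must be of the form $p\pi$ for some $p \in \bZ$, recording the net degree captured by the bubbling. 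To upgrade to convergence in $\cE$ on the exterior of the backward light cone (reading the $T_0 - t$ in the statement as $T_+ - t$), I would apply the energy identity on truncated annuli $\{T_+ - t \leq r \leq r_0\}$ and combine it with vanishing of the energy flux across the backward null cone at $(T_+, 0)$, a standard Shatah--Struwe-type monotonicity statement for energy-critical geometric wave equations at a singular point.

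\textbf{Global case and main difficulty.} Here I would construct $\bs\psi\lin^*$ by extracting a linear profile from $\bs\psi(t_n)$ along an arbitrary sequence $t_n \to \infty$, via Lemma~\ref{lem:nlprof}: profiles with scales $\lambda^j_n \ll t_n$ describe bubbles that remain inside a forward light cone, while any profile escaping to spatial infinity as a free wave assembles into the radiation $\bs\psi\lin^*$; the shift $(\pi m, 0)$ encodes the boundary condition $\psi \to m\pi$ at infinity in $\cE_{0, m}$, and the function $A(t)$ is chosen to outpace the concentration rate of the innermost bubble. The main obstacle — in both cases — is to prove that $\bs\psi_0^*$ (respectively $\bs\psi\lin^*$) does not depend on the sequence used to define it. In the blow-up case, this is automatic from the pointwise construction via finite propagation speed. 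In the global case, I would argue by monotonicity of the exterior $\cE$-energy along outgoing null cones: two different sequences producing two different linear profiles would, by applying the profile decomposition simultaneously along both, yield an additional nontrivial profile outside the cone, violating the monotone decrease of exterior energy. Once independence is secured, the upgrade from sequential to continuous-time convergence follows by a standard subsequence-extraction contradiction.
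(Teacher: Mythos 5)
This lemma is not proved in the paper: it is imported verbatim from C\^ot\'e \cite[Propositions 5.1 and 5.2]{Cote15}, and the paper's only commentary is the subsequent Remark explaining that the proofs are those of Section 5.1 of \cite{CKLS1} and Section 3.2 of \cite{CKLS2}, extended to all $k$ by means of the Strichartz estimates of \cite{PST03b} for the wave equation with inverse-square potential. So there is no in-paper argument to compare against line by line; measured against the cited proofs, your road map (regularity outside the backward light cone plus finite speed of propagation and flux decay in the blow-up case; extraction of a free radiation field in exterior cones $r \geq t - A$ followed by a diagonal choice of $A(t)$ in the global case) is the correct one, and you correctly read the $T_0 - t$ in \eqref{eq:zero-profile-1} as $T_+ - t$.

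What your sketch leaves out is precisely the technical heart of those proofs, so as written it has a gap rather than being a complete alternative. In both cases, passing from a weak or pointwise limit to convergence in $\cE(r \geq T_+ - t)$ (resp.\ $\cE(r \geq t - A)$), and in the global case even constructing $\bs\psi\lin^*$ and showing the nonlinear evolution is well approximated by the linear one in exterior regions, rests on a small-data perturbation argument in exterior light cones driven by Strichartz estimates for $L_0 = -\partial_r^2 - r^{-1}\partial_r + k^2 r^{-2}$; this is exactly the ingredient the paper's Remark singles out as what makes the argument valid for general $k$, and your proposal never engages with it (energy-flux monotonicity alone does not control the nonlinearity $k^2\sin(2\psi)/2r^2$ in the exterior region). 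Two smaller points: the global-case construction in \cite{CKLS2, Cote15} is a direct limiting procedure over exterior cones rather than an application of the full profile decomposition, which avoids having to identify ``the profile escaping to spatial infinity''; and uniqueness of $\bs\psi\lin^*$ does not need your exterior-energy-monotonicity contradiction --- once the characterizing property $\|\bs\psi(t)-\bs\psi\lin^*(t)\|_{\cE(r\geq t-A)}\to 0$ for every fixed $A$ is established, uniqueness follows immediately from the fact that a finite-energy free wave has vanishing energy in $\{r \leq t - A\}$ as $t\to\infty$ and then $A \to \infty$ (part 1 of Lemma~\ref{lem:weakconv} in this paper) together with unitarity of the linear flow.
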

\begin{remark}
In the paper \cite{Cote15}, only the cases $k =1$ and $k=2$ were considered, but the proofs, which are identical to the arguments given in Section 5.1 of~\cite{CKLS1} and Section 3.2 of~\cite{CKLS2}, 
are valid for general $k$, using the Strichartz estimates from \cite{PST03b}.
\end{remark}
In the global case, we will also need the following fact.
\begin{lemma}
\label{lem:weakconv}
Let $t_n$ be an increasing sequence such that $\lim_{n\to \infty}t_n = \infty$,
and let $\rho_n$ be a sequence such that $\lim_{n\to\infty}(t_n - \rho_n) = \infty$.
\begin{enumerate}[1.]
\item
If $\bs\phi\lin: \bR\to \cE$ is a solution of \eqref{eq:wmklin}, then $\lim_{n\to\infty}\|\bs\phi\lin(t_n)\|_{\cE(r \leq \rho_n)} = 0$.
\item
Let $\bs\phi_{0, n} \in \cE$ be a bounded sequence of initial data such that $\lim_{n \to \infty}\|\bs \phi_{0, n}\|_{\cE(r \geq \rho_n)} = 0$.
Let $\bs \phi\linn$ be the solution of \eqref{eq:wmklin} corresponding to the initial data $\bs \phi\linn(t_n ) = \bs\phi_{0, n}$.
Then $\bs \phi\linn(0) \wto 0$ as $n \to \infty$.
\end{enumerate}
\end{lemma}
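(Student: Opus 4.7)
My plan is to address part 1 first (a purely dispersive statement about the linear flow), and then to derive part 2 from it by a duality argument that relies on the following observation: the $\cE$-inner product
\begin{equation}
\langle \bs u, \bs v\rangle_\cE := \int_0^\infty \big(\dot u \dot v + \partial_r u\, \partial_r v + k^2 r^{-2} u v\big)\, r \, \vd r
\end{equation}
is preserved along the linear flow \eqref{eq:wmklin}, in the sense that $\langle \bs u_L(t), \bs v_L(t)\rangle_\cE$ is constant in $t$ for any two solutions $\bs u_L, \bs v_L$. This follows by a direct computation using $\partial_t^2 u = -L_0 u$, the self-adjointness of $L_0$ on $L^2(r \vd r)$, and the identity $\langle L_0 u, u\rangle_{L^2} = \|u\|_\cH^2$. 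In particular $\|\bs\phi_L(t)\|_\cE$ is constant.

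For part 1, I would reduce to the following exterior-cone estimate, a standard dispersive property of linear waves in two space dimensions used in various forms throughout \cite{CKLS1, CKLS2, Cote15}: for any $\bs\phi_L \in \cE$ solving \eqref{eq:wmklin} and any $\epsilon > 0$, there exist $T, A > 0$ such that
\begin{equation}
\|\bs\phi_L(t)\|_{\cE(r \leq t - A)} \leq \epsilon \qquad \text{for all } t \geq T.
\end{equation}
Granting this, part 1 is immediate: for $n$ large enough, $t_n \geq T$ and $t_n - \rho_n \geq A$, so $\{r \leq \rho_n\} \subset \{r \leq t_n - A\}$, and hence $\|\bs\phi_L(t_n)\|_{\cE(r \leq \rho_n)} \leq \epsilon$. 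The exterior-cone estimate itself is proven by a density argument: approximate $\bs\phi_L(0)$ in $\cE$ by data supported in an annulus $\{a \leq r \leq b\}$, apply finite speed of propagation together with the 2D pointwise dispersive decay to control the interior energy of the truncated evolution, and absorb the approximation error using the conservation of energy.

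For part 2, fix an arbitrary $\bs\xi \in \cE$ and let $\bs\xi_L(t)$ denote the solution of \eqref{eq:wmklin} with $\bs\xi_L(0) = \bs\xi$. Conservation of the inner product gives
\begin{equation}
\langle \bs\phi\linn(0), \bs\xi\rangle_\cE = \langle \bs\phi\linn(t_n), \bs\xi_L(t_n)\rangle_\cE = \langle \bs\phi_{0, n}, \bs\xi_L(t_n)\rangle_\cE.
\end{equation}
Splitting the last inner product over $\{r \leq \rho_n\}$ and $\{r \geq \rho_n\}$ and applying the Cauchy--Schwarz inequality yields
\begin{equation}
\big|\langle \bs\phi\linn(0), \bs\xi\rangle_\cE\big| \leq \|\bs\phi_{0, n}\|_\cE \cdot \|\bs\xi_L(t_n)\|_{\cE(r \leq \rho_n)} + \|\bs\phi_{0, n}\|_{\cE(r \geq \rho_n)} \cdot \|\bs\xi\|_\cE.
\end{equation}
The first summand tends to $0$ by part 1 applied to $\bs\xi_L$, together with the assumed boundedness of $\|\bs\phi_{0, n}\|_\cE$; the second tends to $0$ by the hypothesis $\|\bs\phi_{0, n}\|_{\cE(r \geq \rho_n)} \to 0$, noting that $\|\bs\xi_L(t_n)\|_\cE = \|\bs\xi\|_\cE$ by conservation. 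Since $\bs\xi$ was arbitrary, $\bs\phi\linn(0) \wto 0$ in $\cE$.

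The only substantive ingredient is the dispersive exterior-cone estimate invoked in part 1; everything else is a short calculation with the bilinear conservation law. In the actual write-up I would simply quote this estimate from the references above, noting that the argument for general $k$ uses the Strichartz estimates of \cite{PST03b} in the same way as in the remark following Lemma \ref{lem:zero-profile}.
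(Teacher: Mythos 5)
Your argument is correct and rests on exactly the same two ingredients as the paper's proof: the unitarity of the linear flow \eqref{eq:wmklin} on $\cE$ and the interior-cone decay of part 1 (which the paper likewise simply quotes, from \cite{CKS}), combined with splitting the $\cE$-pairing over $\{r \leq \rho_n\}$ and $\{r \geq \rho_n\}$. The only difference is organizational: you test $\bs\phi\linn(0)$ directly against an arbitrary $\bs\xi \in \cE$ transported to time $t_n$, whereas the paper first extracts a weak subsequential limit $\bs\phi_0$ and then runs the same pairing computation to show $\|\bs\phi_0\|_\cE = 0$; your version avoids the weak-compactness reduction but is otherwise the same proof.
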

\begin{proof}
The first claim is a well-known property of the linear wave equation, see for example \cite[Proposition 4]{CKS}.

Regarding the second claim, it is sufficient to prove that every subsequence of $\bs \phi\linn(0)$ has a subsequence weakly converging to $0$.
By weak compactness, we only need to show that $\bs \phi\linn(0) \wto \bs\phi_0$ implies $\bs\phi_0 = 0$.

Let $\bs \phi\linn(0) = \bs \phi_0 + \wt{\bs\phi}_{0, n}$, and let $\wt{\bs\phi}\linn$ be the solution of \eqref{eq:wmklin}
corresponding to the initial data $\wt{\bs\phi}\linn(0) = \wt{\bs\phi}_{0, n}$.
Let $\bs\phi\lin$ be the solution of \eqref{eq:wmklin} for the initial data $\bs\phi\lin(0) = \bs\phi_0$,
so that $\bs \phi\linn(t) = \bs \phi\lin(t) + \wt{\bs\phi}\linn(t)$ for all $t \geq 0$.
It is easy to see that \eqref{eq:wmklin} defines a unitary group in $\cE$, hence
\begin{equation}
\la \bs\phi\lin(t_n), \bs\phi_{0,n} - \bs\phi\lin(t_n)\ra_\cE = \la \bs\phi\lin(t_n), \wt{\bs\phi}\linn(t_n)\ra_\cE = \la \bs\phi_0, \wt{\bs\phi}_{0, n}\ra_\cE \to 0, \quad\text{as }n \to \infty,
\end{equation}
where $\la \cdot, \cdot \ra$ denotes the inner product in $\cE$.

By the first part of the lemma, we have $\|\bs\phi\lin(t_n)\|_{\cE(r \leq \rho_n)} \to 0$,
which yields $\la \bs\phi\lin(t_n), \bs\phi_{0,n}\ra_\cE \to 0$, and we obtain
$\|\bs \phi_0\|_\cE^2 = \la \bs\phi\lin(t_n), \bs\phi\lin(t_n)\ra_\cE \to 0$.
\end{proof}

\begin{proof}[Proof of Theorem~\ref{thm:main2}]
In the proof, we use several times the following fact.
If $\bs\phi_n, \bs\psi_n$ are sequences such that $E(\bs\phi_n), E(\bs\psi_n)$ are bounded,
and $\rho_n \in (0, \infty)$ is a sequence such that
\begin{equation}
\lim_{n\to\infty}\|\bs\phi_n\|_{\cE(r \leq \rho_n)} = 0, \qquad \lim_{n\to\infty}\|\bs\psi_n\|_{\cE(r \geq \rho_n)} = 0,
\end{equation}
then
\begin{equation}
\lim_{n\to\infty}\big(E(\bs\phi_n + \bs\psi_n) - E(\bs\phi_n) - E(\bs\psi_n)\big) = 0.
\end{equation}

\textbf{The blow-up case.}
We first prove that it can be assumed without loss of generality that $m = 0$, so that $\bs\psi_0^* \in \cE$.

To see this, consider the solution $\wt{\bs \psi}$ of \eqref{eq:wmk} corresponding to the initial data
$\wt{\bs \psi}(T_0) = \chi\bs \psi(T_0)$, where $\chi$ is a smooth cut-off function such that $\chi(r) = 1$
if $r \leq \frac 12$ and $\chi(r) = 0$ if $r \geq 1$, and $T_+ - \frac 18 < T_0 < T_+$.
By finite speed of propagation, $\wt{\bs \psi}(t, r) = \bs\psi(t, r)$ for all $t \in [T_0, T_+)$ and $r \leq 3/8$
(since an equivariant wave map can only blow up at $r = 0$, it is clear that $\wt \psi$ does not blow up until time $T_+$).
Let $\wt{\bs\psi}_0^* \in \cE$ be given by Lemma~\ref{lem:zero-profile}, so that $\wt{\bs\phi}(t) := \wt{\bs\psi}(t) - \wt{\bs\psi}_0^*$ satisfies \eqref{eq:zero-profile-1}. It follows that $\wt{\bs\psi}_0^*(r) = {\bs\psi}_0^*(r)$ if $r \leq \frac 38$,
implying $\wt{\bs\phi}(t, r) = {\bs\phi}(t, r)$ if $r \leq \frac 38$.
Thus, for any sequence $s_n \to T_+$, $\bfd_\pm(\bs\phi(s_n)) \to 0$ if and only if $\bs\bfd_\pm(\wt{\bs\phi}(s_n))  \to 0$.
We deduce that (sequential or continuous-time) soliton resolution holds for $\bs\psi$ if and only if it holds for $\wt{\bs\psi}$.

We thus assume $m = 0$. Let $\bs \psi_0^* \in \cE$ be given by Lemma~\ref{lem:zero-profile}. We decompose
\begin{equation}
\bs\psi(t) = \bs\phi(t) + \bs \psi_0^*.
\end{equation}
We have $\lim_{n\to \infty}E(\bs \phi(t_n)) = 8 k\pi$, thus $E(\bs\psi_0^*) = E(\bs\psi) - 8k\pi$.
Since $\lim_{t\to T_+}\|\bs\phi(t)\|_{\cE(r \geq T_+ - t)} = 0$ and $\lim_{t\to T_+}\|\bs\psi_0^*\|_{\cE(r \leq T_+ - t)} = 0$,
we obtain $\lim_{t \to T_+} \big(E(\bs \phi(t) + \bs \psi_0^*) - E(\bs\phi(t)) - E(\bs\psi^*_0)\big) = 0$, in other words
\begin{equation}
\lim_{t\to T_+} E(\bs\phi(t)) = 8k\pi.
\end{equation}
Suppose there exists a sequence $\tau_n \to T_+$ such that $\sup_n \|\bs\phi(\tau_n)\|_\cE < \infty$.
Upon extracting a subsequence, we can assume without loss of generality that the sequence $\bs \phi(\tau_n)$ has a profile decomposition.
For $j \in \{1, 2, \ldots\}$, let $\bs U^j$ be the nonlinear profiles, with the corresponding parameters $\lambda_{n}^j$, $t_{n}^j$.
Let $\bs U^0$ be the solution of \eqref{eq:wmk} with the initial data $\bs U^0(0) = \bs \psi_0^*$,
$t_{n}^0 = 0$, $\lambda_{n}^0 = 1$.
Since $\bs \phi(\tau_n) \wto 0$ as $n \to \infty$, see Lemma~\ref{lem:zero-profile},
the sequence $\bs\psi(\tau_n)$ has a profile decomposition
with profiles $\bs U^j$, $j \in \{0, 1, 2, \ldots\}$, and parameters $\lambda_{n}^j, t_{n}^j$.

Thanks to the Pythagorean formula \eqref{eq:pytha}, either there is just one non-zero profile of energy $8k\pi$
and $\bs w_{n, L}^J = \bs w_{n, L}^1 \to 0$ in $\cE$ for all $J \geq 1$,
or all the profiles scatter in both time directions.

\noindent
\textbf{Case 1.} All the profiles $\bs U^j$ scatter in both time directions.
Fix any $0 < T < T_+(\bs U^0)$.
The assumptions of Lemma~\ref{lem:nlprof} are satisfied with $s_n = T$,
which implies that $\bs \psi$ exists on the time interval $[\tau_n, \tau_n + T]$ for all $n$ large enough.
This is in contradiction with the fact that $\bs\phi$ blows up at $t = T_+$.

\noindent
\textbf{Case 2.} There is just one profile $\bs U^1$, and $\bs w\linn^J = 0$ for $J \geq 1$.
By taking a subsequence and adjusting $\bs U^1$, we can assume that $\lim_{n\to \infty}{t_{n}^1}/{\lambda_{n}^1} \in \{-\infty,\infty\}$, or $t_{n}^1 = 0$ for all $n$.

\noindent
\textbf{Case 2.1.} We either have $\lim_{n \to \infty}{t_{n}^1}/{\lambda_{n}^1} = -\infty$,
or $t_{n}^1 = 0$ for all $n$ and $\bs U^1$ scatters in the forward time direction.
In this situation, the same argument as in Case 1 yields a contradiction.

\noindent
\textbf{Case 2.2.} We either have $\lim_{n \to \infty}{t_{n}^1}/{\lambda_{n}^1} = \infty$,
or $t_{n}^1 = 0$ for all $n$ and $\bs U^1$ scatters in the backward time direction.

Let $\bs\psi_n(t) := \bs\psi(\tau_n + t)$.
The assumptions of Lemma~\ref{lem:nlprof} are satisfied with $s_n = -\infty$.
For $t_m$ fixed and $n$ large enough so that $t_m < \tau_n$, we obtain
\begin{equation}
\label{eq:blowup-22}
\bs \psi(t_m) = \bs \psi_n(-(\tau_{n} - t_m)) = \bs U^1\Big( \frac{-(\tau_n - t_m) - t_n^1}{\lambda_n^1} \Big)
+ \bs\psi^*_0 + \bs h_n,
\end{equation}
with $\lim_{n \to \infty}\bs \|\bs h_n\|_\cE = 0$.
Since $\bs U^1$ scatters in the backward time direction, there exists $\epsilon > 0$ such that
$\bfd(\bs U^1(t)) \geq 2\epsilon$ for all $t \leq 0$.
Taking $n \to \infty$ in \eqref{eq:blowup-22}, we obtain
\begin{equation}
\bfd(\bs \psi(t_m) - \bs\psi_0^*) \geq \epsilon.
\end{equation}
This is true for all $m$, with $\epsilon$ independent of $m$, in contradiction with the assumptions of Theorem~\ref{thm:main2}.

Thus, we have proved that $\lim_{t \to T_+} \|\bs \phi(t)\|_\cE = \infty$.
By Lemma~\ref{lem:two-bubble-norm}, $\bs \phi(t)$ converges to a two-bubble in continuous time.

\textbf{The global case.}
The proof is completely analogous. We decompose
\begin{equation}
\bs\psi(t) = \bs\phi(t) + \bs \psi\lin^*(t).
\end{equation}
and we claim that 
\begin{align}  \label{eq:phi-en-lim} 
\lim_{t \to \infty} E( \bs \phi(t) ) = 8k \pi. 
\end{align} 
To see this, note that the limits $\lim_{n\to \infty}E(\bs \phi(t_n)) = 8 k\pi$,
$\lim_{n\to \infty}\|\bs \phi(t_n)\|_{\cE(r \geq t_n - A(t_n))} = 0$,
and  $\lim_{t \to \infty}\|\bs \psi\lin^*(t)\|_{\cE(r \leq t - A(t))} = 0$,
imply that $$\lim_{n\to \infty}E(\bs\psi\lin^*(t_n)) = E(\bs\psi) - 8k\pi.$$
Next, let $\bs\psi^*(t) \in \cE$ denote the solution to~\eqref{eq:wmk} that scatters to $\bs \psi\lin^*(t)$, i.e., 
\begin{align} 
\| \bs\psi^*(t) - \bs\psi\lin^*(t) \|_{\cE} \to 0 \, \, \textrm{as} \, \, t \to \infty,
\end{align} 
 which, together with the previous displayed equation implies that $E( \bs \psi^*) = E( \bs \psi) - 8 k \pi$. Now define $\bs{\wt \phi}(t)$ by  $\bs \psi(t) = \bs{\wt \phi}(t) + \bs \psi^*(t)$.
Since $\lim_{t\to \infty}\|\bs\psi^*(t)\|_{\cE(r \leq t - A(t))} = 0$ and  $\lim_{t\to \infty}\|\bs {\wt \phi}(t)\|_{\cE(r \geq t - A(t))} = 0$,
we obtain $\lim_{t \to \infty} \big(E(\bs{\wt \phi}(t) + \bs \psi^*(t)) - E(\bs{ \wt\phi}(t)) - E(\bs\psi^*)\big) = 0$, or in other words
\begin{equation}
\lim_{t\to \infty} E(\bs\phi(t)) =  \lim_{t \to \infty} E( \bs{\wt \phi}(t))= E(\bs\psi) - E(\bs\psi^*) = 8k\pi, 
\end{equation}
proving~\eqref{eq:phi-en-lim}. 

Suppose there exists a sequence $\tau_n \to \infty$ such that $\sup_n \|\bs\phi(\tau_n)\|_\cE < \infty$.
Upon extracting a subsequence, there exists a profile decomposition of the sequence $\bs \phi(\tau_n)$.
For $j \in \{1, 2, \ldots\}$, let $\bs U^j$ be the nonlinear profiles, with the corresponding parameters $\lambda_{n}^j$, $t_{n}^j$.
Set $\bs U\lin^0 := \bs \psi\lin^*$, $t_{n}^0 = \tau_n$, $\lambda_{n}^0 = 1$.
In order to check that $\bs U\lin^0$ is a profile, we need to verify that $\bs V\linn(0) \wto 0$,
where $\bs V\linn$ is the solution of \eqref{eq:wmklin} corresponding to the initial data $\bs V\linn(\tau_n) = \bs\phi(\tau_n)$.
By Lemma~\ref{lem:zero-profile}, $\|\bs\phi(\tau_n)\|_{\cE(r \geq \tau_n - A(\tau_n))} \to 0$.
Thus Lemma~\ref{lem:weakconv}, applied with $\rho_n := \tau_n - A(\tau_n)$, implies the claim.

Let $\bs U^0$ be the corresponding nonlinear profile, so
that the sequence $\bs\psi(\tau_n)$ has a profile decomposition
with nonlinear profiles $\bs U^j$, $j \in \{0, 1, 2, \ldots\}$, and parameters $\lambda_{n}^j, t_{n}^j$.

Either there is just one profile of energy $8k\pi$
and $\bs w\linn^J = \bs w\linn^1 \to 0$ in $\cE$ for all $J \geq 1$,
or all the profiles scatter in both time directions.

\noindent
\textbf{Case 1.} All the nonlinear profiles $\bs U^j$ scatter in both time directions.
Since the nonlinear profile $\bs U^0$ scatters as $t \to \infty$,
Lemma~\ref{lem:nlprof} yields a contradiction with the fact that $\bs\phi$ does not scatter as $t \to \infty$.

\noindent
\textbf{Case 2.} There is just one profile $\bs U^1$, and $\bs w\linn^J = 0$ for $J \geq 1$.
By taking a subsequence and adjusting $\bs U^1$, we can assume that $\lim_{n\to \infty}{t_{n}^1}/{\lambda_{n}^1} \in \{-\infty,\infty\}$, or $t_{n}^1 = 0$ for all $n$.

\noindent
\textbf{Case 2.1.} We either have $\lim_{n \to \infty}{t_{n}^1}/{\lambda_{n}^1} = -\infty$,
or $t_{n}^1 = 0$ for all $n$ and $\bs U^1$ scatters in the forward time direction.
In this situation, the same argument as in Case 1 yields a contradiction.

\noindent
\textbf{Case 2.2.} We either have $\lim_{n \to \infty}{t_{n}^1}/{\lambda_{n}^1} = \infty$,
or $t_{n}^1 = 0$ for all $n$ and $\bs U^1$ scatters in the backward time direction.

Let $\bs\psi_n(t) := \bs\psi(\tau_n + t)$.
The assumptions of Lemma~\ref{lem:nlprof} are satisfied with $s_n = -\infty$.
For $t_m$ fixed and $n$ large enough so that $t_m < \tau_n$, we obtain
\begin{equation}
\label{eq:global-22}
\bs \psi(t_m) = \bs \psi_n(-(\tau_{n} - t_m)) = \bs U^1\Big( \frac{-(\tau_n - t_m) - t_n^1}{\lambda_n^1} \Big)
+ \bs\psi\lin^*(t_m) + \bs h_n,
\end{equation}
with $\lim_{n \to \infty}\bs \|\bs h_n\|_\cE = 0$.
Since $\bs U^1$ scatters in the backward time direction, there exists $\epsilon > 0$ such that
$\bfd(\bs U^1(t)) \geq 2\epsilon$ for all $t \leq 0$.
Taking $n \to \infty$ in \eqref{eq:global-22}, we obtain
\begin{equation}
\bfd(\bs \psi(t_m) - \bs\psi\lin^*(t_m)) \geq \epsilon.
\end{equation}
This is true for all $m$, with $\epsilon$ independent of $m$, in contradiction with the assumptions of Theorem~\ref{thm:main2}.

Thus, we have proved that $\lim_{t \to T_+} \|\bs \phi(t)\|_\cE = \infty$.
By Lemma~\ref{lem:two-bubble-norm}, $\bs \phi(t)$ converges to a two-bubble in continuous time.
\end{proof}

\begin{proof}[Proof of Theorem~\ref{thm:main1}]
Let $k = 1$. The energy constraint implies that in Theorem~1.1 in \cite{Cote15}, we have $J \leq 2$.
If $J \neq 2$, then Corollary~1.2 from \cite{Cote15} yields the result.
If $J = 2$, then the assumptions of our Theorem~\ref{thm:main2} are satisfied.

The case $k = 2$ is analogous and uses Theorem~1.2 from \cite{JK}.
\end{proof}
\bibliographystyle{plain}
\bibliography{researchbib}

\bigskip
\centerline{\scshape Jacek Jendrej}
\smallskip
{\footnotesize
 \centerline{CNRS and LAGA, Universit\'e Sorbonne Paris Nord}
\centerline{99 av Jean-Baptiste Cl\'ement, 93430 Villetaneuse, France}
\centerline{\email{jendrej@math.univ-paris13.fr}}
} 
\medskip 
\centerline{\scshape Andrew Lawrie}
\smallskip
{\footnotesize
 \centerline{Department of Mathematics, Massachusetts Institute of Technology}
\centerline{77 Massachusetts Ave, 2-267, Cambridge, MA 02139, U.S.A.}
\centerline{\email{alawrie@mit.edu}}
}

\end{document}